\newtheorem{theorem}{Theorem}[section]
\newtheorem{lemma}[theorem]{Lemma}
\newtheorem{corollary}[theorem]{Corollary}
\newtheorem{assumption}{Assumption}
\theoremstyle{definition}
\newtheorem{definition}[theorem]{Definition}
\newtheorem{example}[theorem]{Example}
\theoremstyle{remark}
\newtheorem{remark}[theorem]{Remark}
\numberwithin{equation}{section}
\def\sT {{\sf T}}
\newcommand{\dom}{\text{dom }}
\newcommand{\bbE}{\mathbb{E}}
\newcommand{\bbP}{\mathbb{P}}
\newcommand{\bbR}{\mathbb{R}}
\newcommand{\sint}{\text{int}\ }
\newcommand{\sbd}{\text{bd}\ }
\newcommand{\scl}{\text{cl}\ }
\newcommand{\norm}[1]{\|#1\|}
\newcommand{\ip}[1]{\langle #1 \rangle}
\newcommand\tab[1][1cm]{\hspace*{#1}}
\newcommand\stab[1][0.7cm]{\hspace*{#1}}
\begin{document}

\title{On Feasibility of Sample Average Approximation Solutions}

\author{Rui Peng Liu}
\address{School of Industrial and Systems Engineering, Georgia Institute of Technology, Atlanta, Georgia 30332}
\email{rayliu@gatech.edu}

\subjclass[2000]{Primary 90C15; Secondary 03E75}

\date{Jan 6, 2020}


\keywords{(multistage) stochastic programming, sample average approximation method, feasibility, convergence, exponential rate}

\begin{abstract}
When there are infinitely many scenarios, the current studies of two-stage stochastic programming problems rely on the relatively complete recourse assumption. However, such an assumption can be unrealistic for many real-world problems. This motivates us to study general stochastic programming problems where the sample average approximation (SAA) solutions are not necessarily feasible. When the problems are convex and the true solutions lie in the interior of feasible solutions, we show that the portion of infeasible SAA solutions decays exponentially as the sample size increases. We also study functions with chain-constrained domain and show that the portion of SAA solutions having a low degree of feasibility decays exponentially as the sample size increases. This result is then extended to multistage stochastic programming.
\end{abstract}

\maketitle

\section{Introduction}\label{sec:intr}

We consider the stochastic programming problem
\begin{equation} \label{gsp}
\inf_{x\in \mathcal{X}} F(x) := \bbE[f_\xi(x)],
\end{equation}
where $\mathcal{X}\subseteq \bbR^n$ is a non-empty set; the random vector $\xi : \Omega \rightarrow \bbR^{n_0}$ is defined on the probability space $(\Omega, \mathcal{F}, \bbP)$, and the support of $\xi$ is $\Xi := \xi(\Omega)\subseteq \bbR^{n_0}$ (we may also use $\xi$ to denote the outcomes in $\Xi$); and for each outcome $\xi\in \Xi$, $f_\xi : \mathbb{R}^n\rightarrow \mathbb{R}\cup \{+\infty\}$ is an extended real-valued function. Throughout the paper, we assume $F(x) < +\infty$ for some $x\in \mathcal{X}$.

An important class of \eqref{gsp} is the two-stage stochastic programming. In two-stage problems, $f_\xi(x)$ is given by the optimal value of the second stage problem, i.e.,
\[ f_\xi(x) := \inf_{y\in \mathcal{Y}(x, \xi)}\ g_\xi(y), \]
where $\mathcal{Y}$ is a multivalued function that maps $(x, \xi)$ to sets, and $g_\xi$ is a real-valued function for each $\xi\in \Xi$. By definition, $f_\xi(x) = +\infty$ if the set $\mathcal{Y}(x, \xi)$ is empty. In two-stage stochastic programming, the first stage decisions $x$ should be implemented before a realization of the random data becomes available and hence should be independent of the random data. The second stage decisions $y$ are made after observing the random data and are functions of the data. The model has found wide applications such as transportation planning \cite{BA04}, water resource management \cite{HL00}, power production \cite{WGW12}, etc.

When $\Xi$ contains infinitely many outcomes, the current studies of two-stage problems rely on the relatively complete recourse assumption; that is, for every $x\in \mathcal{X}$ and almost every $\xi\in \Xi$, the set $\mathcal{Y}(x, \xi)$ is non-empty. In terms of $f_\xi$, the assumption states that $\mathbb{P}\{f_\xi(x) = +\infty\} = 0$ for every $x\in \mathcal{X}$. However, such an assumption can be unrealistic for many real-world applications. For example, when deciding the size of a reservoir for water supply during a potential drought, it could happen that some size is too small to store enough water. Although one can forge relatively complete recourse by passing to a penalized problem~\cite{SN05}, the optimal solutions of the penalized problem may be inferior with respect to the original formulation.

In general, it is difficult to evaluate the expectation in \eqref{gsp} directly, because the underlying distribution is usually unknown. Even if the distribution is given, it is computationally hard to evaluate high-dimensional integrals to a high accuracy. The sample average approximation (SAA) method is a sampling-based method that aims to approximate the expectation using the Monte Carlo sampling technique; theoretical results (\cite{KSH02}\cite{SDR14}) and numerical experiments (\cite{LSW06}\cite{MMW99}\cite{VAK03}) indicate that the approximations could be reasonably accurate when the SAA method is applied to the two-stage stochastic programming problems with relatively complete recourse. In this paper, we study the SAA method applied to the problems \eqref{gsp} where the probability $\mathbb{P}\{f_\xi(x) = +\infty\}$ could be positive for some $x\in \mathcal{X}$. To this end we make the following assumption:
\begin{assumption}
It is possible to generate an independent and identically distributed (i.i.d.) sample $\xi^1, \xi^2, \ldots,$ of realizations of the random vector $\xi$.
\end{assumption}

The SAA method generates a (random) sample $\xi^{[N]} := (\xi^1, \ldots, \xi^N)$ of size $N$, and approximates the expectation function $F$ by the SAA function
\[ \hat{F}_N(x) := \frac{1}{N} \sum_{i = 1}^N f_{\xi^i}(x), \]
thereby approximating the true problem by the SAA problem
\begin{equation} \label{saa}
\inf_{x\in \mathcal{X}} \hat{F}_N(x).
\end{equation}
Note that the SAA problem \eqref{saa} depends on the generated sample $\xi^{[N]}$. We assume there is a mapping that assigns a SAA solution $x^*(\xi^{[N]})$ to each SAA problem, e.g., an optimization algorithm that outputs $x^* = x^*(\xi^{[N]})$ by solving \eqref{saa}. We only assume that $x^*$ is a feasible solution of \eqref{saa} (i.e., $x^*\in \mathcal{X}$ and $\hat{F}_N(x^*) < +\infty$), while $x^*$ needs not be an optimal solution; indeed, it is unrealistic to require $x^*$ to be optimal when the SAA problem is nonconvex. In section \ref{sec:tcc}, we explicitly require $x^*$ to be optimal in stochastic convex programming.

The SAA solutions are not necessarily feasible for the true problem \eqref{gsp}; it could happen that $F(x^*(\xi^{[N]})) = +\infty$. To better understand the quality of the SAA solutions, it is important to study how feasible the solution $x^*(\xi^{[N]})$ is. The following definition of degree of feasibility involves the concept of the domain of $f_\xi$, i.e., $\dom f_\xi = \{x : f_\xi(x) < +\infty\}$.
\begin{definition}[Degree of Feasibility] \label{dof}
The degree of feasibility of an SAA solution $x^*(\xi^{[N]})$ is
\[ d(x^*(\xi^{[N]})) := \bbP\{\xi\in\Xi : x^*(\xi^{[N]})\in \dom f_\xi\}. \]
One can also view $\xi$ as a random vector independent of the random sample $\xi^{[N]}$, then $d(x^*(\xi^{[N]}))$ is the conditional probability
\[ d(x^*(\xi^{[N]})) = Pr(x^*(\xi^{[N]})\in \dom f_\xi\ |\ \xi^{[N]}). \]
We adopt the former view for other similar definitions.
\end{definition}

We intend to demonstrate in this paper that, for a broad class of problems, the portion of the SAA solutions $x^*(\xi^{[N]})$ such that $d(x^*(\xi^{[N]})) < 1 - \alpha$ (for some $\alpha\in (0, 1)$) decreases exponentially in $N$; here, the portion is measured with respect to the product measure $\mathbb{P}^N$ associated with $\xi^{[N]}$. A similar problem regarding chance-constrained stochastic programming was studied in \cite{CC05}\cite{CC08} (and references therein), but the nature of their problem is somewhat different from the one considered in this paper. The asymptotic epi-convergence of the SAA function to the expectation function was studied in \cite{DW88}. Their result establishes the convergence of the optimal SAA solutions to the optimal solutions of the true problem \eqref{gsp} when the sample size $N$ tends to infinity. However, the degree of feasibility cannot be deduced from their result when $N$ is finite.

The rest of this paper is organized as follows. In section \ref{sec:ccd}, we investigate an exponential rate of convergence of degree of feasibility when $\{f_\xi\}$ has chain-constrained domain (the proofs are delayed to Appendix \ref{sec:ptc}). In section \ref{sec:tcc}, stochastic convex programming is considered. Under the uniform convergence property of the SAA method, we show that the portion of infeasible SAA solutions decays exponentially in $N$ when the true solutions lie in the interior of the feasible region. Also, by combining uniform convergence and chain-constrained domain, we significantly improve the rate presented in section \ref{sec:erc}. In section \ref{sec:msp}, the result is extended to multistage stochastic programming.

We use the following notation and terminology throughout the paper. We denote $F^*$ and $\mathcal{X}^*$ to be the optimal value and the set of optimal solutions of the true problem \eqref{gsp}, respectively. The Euclidean norm of a vector $x\in \bbR^n$ is $\norm{x}$. For a set $U\subset \mathbb{R}^n$, we denote $\sint U, \sbd U, \scl U, U^c, |U|$ to be its interior, boundary, closure, complement, and cardinality, respectively; also, we denote the distance of $x$ to $U$ by $\text{dist}_U(x) = \text{dist}(x, U) = \inf_{y\in U} \norm{x - y}$. We denote $\mathbb{R}, \mathbb{Q}, \mathbb{Q}^c, \mathbb{N}$ to be real, rational, irrational, and natural numbers, respectively. Given two sets $U$ and $V$, we denote $U\subset V$ to be $U\subseteq V$ and $U\ne V$. The effective domain of a function $f$ is $\dom f = \{x : f(x) < +\infty\}$. For a natural number $m$, we denote $[m] := \{1, \ldots, m\}$. The preimage of a set $T$ under the function $f$ is $f^{-1}T = \{x : f(x)\in T\}$.

\section{Chain-constrained domain} \label{sec:ccd}

In this section, we investigate the degree of feasibility of SAA solutions when $\{f_\xi\}$ has chain-constrained domain. Let us first define chain-constrained domain.

\begin{definition}
{\rm
A collection of sets $\{U^\omega\}_{\omega\in I}$ is a chain if for any $\omega_1, \omega_2\in I$, either $U^{\omega_1}\subseteq U^{\omega_2}$ or $U^{\omega_1}\supseteq U^{\omega_2}$.
}
\end{definition}

If the index set $I$ is finite, then the sets can be put into order:
\[ U^{\omega_1}\subseteq U^{\omega_2}\subseteq\dots \subseteq U^{\omega_n}; \]
in particular, $\cap_{\omega\in I} U^\omega = U^{\omega_1}$, i.e., $U^{\omega_1}$ is the smallest element.

\begin{definition}[Chain-constrained Domain] \label{ccd}
{\rm
A collection of functions $\{f_\xi\}_{\xi\in \Xi}$ has chain-constrained domain of order $m\in \mathbb{N}$ if there exist $m$ collections of sets $\big\{U_k^\xi\big\}_{\xi\in \Xi}, k\in [m]$, such that each collection $\big\{U_k^\xi\big\}$ is a chain, and for each $\xi\in \Xi$,
\begin{equation} \label{ccdform}
\dom f_\xi = \cap_{k = 1}^m U_k^\xi.
\end{equation}
}
\end{definition}
\begin{remark}
{\rm
Definition \ref{ccd} is motivated by the functional constraints
\begin{equation} \label{fcdform}
\dom f_\xi = \{x : c_k(x) \le \ell_k(\xi), k\in [m]\} \stab \forall \xi\in \Xi,
\end{equation}
where for each $k$, $c_k$ is a function of $x$ and $\ell_k$ is a random variable supported on $\Xi$. Consider
\[ U_k^\xi = \{x : c_k(x) \le \ell_k(\xi)\}; \]
observe that for any $\xi_1$ and $\xi_2\in \Xi$, either $\ell_k(\xi_1)\le \ell_k(\xi_2)$ or $\ell_k(\xi_1)\ge \ell_k(\xi_2)$, which implies $U_k^{\xi_1}\subseteq U_k^{\xi_2}$ or $U_k^{\xi_1}\supseteq U_k^{\xi_2}$; thus $\{U_k^\xi\}_{\xi\in \Xi}$ is the chain induced by both the function $c_k$ and the random variable $\ell_k$. One advantage of studying the form \eqref{ccdform} instead of \eqref{fcdform} is that \eqref{ccdform} helps recognize sets that are not commonly represented as functional-constrained domain, e.g., when $\{U^\xi\}$ is a chain of discrete sets.
}
\end{remark}

Chain-constrained domain covers a broad range of two-stage stochastic programming problems.

\begin{example} \label{ex:ts}
{\rm
Suppose $f_\xi$ is given by the second stage problem
\begin{align*}
f_\xi(x) := \inf_y\ \ & g_\xi(y) \\
			\text{s.t.}\ \ & W_\xi y + T_\xi x = h_\xi \\
							& y\ge 0,
\end{align*}
where the data $(h_\xi, g_\xi, T_\xi, W_\xi)$ satisfy the conditions
\begin{enumerate} 
\item the functions $g_\xi$ are finite everywhere;
\item there are only finitely many distinct $W_\xi$ and $T_\xi$, i.e., $|\{W_\xi\}| = p$ and $|\{T_\xi\}| = q$ for some $p, q\in \mathbb{N}$ (though there is no restriction on $h_\xi$).
\end{enumerate}
Note that $f_\xi$ is convex when $g_\xi$ is a convex function.

Denote $\{W_1, \ldots, W_p\}$ and $\{T_1, \ldots, T_q\}$ to be the set of distinct matrices. Observe that $f_\xi(x) < +\infty$ if and only if the set $\{y\ge 0 : W_\xi y + T_\xi x =  h_\xi\}$ is non-empty, which by Farkas' lemma, if and only if $a^{\sT}(h_\xi - T_\xi x)\ge 0$ for all $a$ such that $a^{\sT}W_\xi\ge 0$. For each $i\in [p]$, let $\{a_{ij}\}_{j\in J_i}$ denote the set of extreme rays of the polyhedral cone $\{a : a^{\sT}W_i\ge 0\}$, then
\[ \dom f_\xi = \{x : a_{ij}^{\sT} T_k x\le a_{ij}^{\sT} h_\xi,\ W_\xi = W_i, j\in J_i, T_\xi = T_k\}\stab \forall \xi\in \Xi. \]
For each $i\in [p], j\in J_i$, and $k\in [q]$, consider the chain $\{U_{ijk}^\xi\}$ such that
\[ U_{ijk}^\xi := \begin{cases} \{x : a_{ij}^{\sT} T_k x\le a_{ij}^{\sT} h_\xi\} & \text{if } W_\xi = W_i \text{ and } T_\xi = T_k \\ \bbR^n & \text{otherwise} \end{cases}, \]
then
\[ \dom f_\xi = \cap_{i\in [p], j\in J_i, k\in [q]} U_{ijk}^\xi \stab \forall \xi\in \Xi, \]
hence $\{f_\xi\}$ has chain-constrained domain of order $q \sum_{i = 1}^{p} |J_i|$. If $W_i$ is never coupled with $T_k$ in any outcome $\xi$, then $U_{ijk}^\xi = \mathbb{R}^n$ for all $\xi$, and we can safely remove $\{U_{ijk}^\xi\}$ to reduce the order.
}
\end{example}

As illustrated by Example \ref{sep}, chain-constrained domain is more general than functional-constrained domain.

\begin{example} \label{sep}
{\rm
Let $\tau$ be an exponential random variable, then $\tau$ is supported on $(0, \infty)$. Consider the chain $\{U^\tau\}_{\tau > 0}$,
\[ U^\tau := \begin{cases} \tau \overline{B} & \text{if $\tau\in \mathbb{Q}$} \\ \tau B & \text{if $\tau\in \mathbb{Q}^c$} \end{cases} \tab \forall \tau > 0, \]
where $\overline{B}$ and $B$ are the closed and open unit intervals in $\bbR$. We claim that $\{U^\tau\}$ cannot be characterized by sublevel sets of any function $c$, i.e., there does not exist $c, \ell$ such that $U^\tau = c^{-1}(-\infty, \ell(\tau)]$ for all $\tau > 0$.

For contradiction, suppose there exist $c$ and $\ell$ such that $U^\tau = c^{-1}(-\infty, \ell(\tau)]$ for all $\tau$. For $0 < \tau_1 < \tau_2$, we have $U^{\tau_1}\subset U^{\tau_2}$, which implies $\ell(\tau_1) < \ell(\tau_2)$. Let $x$ be a positive irrational number, then $x\not\in U^x$ but $x\in \cap_{\tau > x} U^\tau$, which translates to $\ell(x) < c(x)\le \inf_{\tau > x} \ell(\tau)$. Consider the gap $\gamma_x := \inf_{\tau > x} \ell(\tau) - \ell(x)$, then $\gamma_x > 0$ for each positive irrational number $x$. For any $0 < \tau_1 < \tau_2$,
\[ \ell(\tau_2) - \ell(\tau_1)\ge \sum_{x\in \mathbb{Q}^c\cap [\tau_1, \tau_2)} \gamma_x = +\infty, \]
since the sum of uncountably many positive numbers necessarily diverges to infinity, a contradiction.
}
\end{example}

\subsection{Exponential rate of convergence} \label{sec:erc}
For a specified threshold $\alpha\in (0, 1)$, we are interested in the probability
\begin{equation} \label{lowdof}
\bbP^N\{d(x^*(\xi^{[N]})) < 1 - \alpha\}.
\end{equation}
When $\{f_\xi\}$ has chain-constrained domain, \eqref{lowdof} decreases exponentially in $N$. We bound \eqref{lowdof} by bounding the degree of feasibility of the domain of SAA function $\dom \hat{F}_N = \cap_{i = 1}^N \dom f_{\xi^i}$.
\begin{definition} \label{Dof}
{\rm
The degree of feasibility of $\dom \hat{F}_N$ is
\[ D(\xi^{[N]}) := \bbP\{\xi\in \Xi : \cap_{i = 1}^N \dom f_{\xi^i}\subseteq \dom f_\xi\}. \]
}
\end{definition}
For each sample $\xi^{[N]}$, the SAA solution $x^*(\xi^{[N]})$ is assumed to lie in the domain of the SAA function, i.e., $\hat{F}_N(x^*(\xi^{[N]})) < +\infty$. If $\cap_{i = 1}^N \dom f_{\xi^i}\subseteq \dom f_\xi$ for some $\xi$, then $x^*(\xi^{[N]})\in \dom f_\xi$. Hence, $d(x^*(\xi^{[N]}))\ge D(\xi^{[N]})$ and
\[ \bbP^N\{d(x^*(\xi^{[N]})) < 1 - \alpha\}\le \bbP^N\{D(\xi^{[N]}) < 1 - \alpha\}. \]

Suppose $\{f_\xi\}$ has chain-constrained domain of order $m$. For fixed $\xi^{[N]}$ and $\xi\in \Xi$, if $\cap_{i = 1}^N U_k^{\xi^i}\subseteq U_k^\xi$ for each $k\in [m]$, then
\[ \cap_{i = 1}^N \dom f_{\xi^i} = \cap_{k = 1}^m  (\cap_{i = 1}^N U_k^{\xi^i})\subseteq \cap_{k = 1}^m U_k^\xi = \dom f_\xi. \]
It follows that
\[ \mathfrak{D}(\xi^{[N]}) := \mathbb{P}\left\{\xi\in \Xi : \cap_{i = 1}^N U_k^{\xi^i}\subseteq U_k^\xi \text{ for all } k\in [m]\right\}\le D(\xi^{[N]}) \]
and
\[ \mathbb{P}^N\{D(\xi^{[N]}) < 1 - \alpha\}\le \mathbb{P}^N\{\mathfrak{D}(\xi^{[N]}) < 1 - \alpha\}. \]
Under the technical assumptions of Theorem \ref{ptcthm}, Corollary \ref{ptccor2} gives
\[ \mathbb{P}^N\left\{\mathfrak{D}(\xi^{[N]}) < 1 - \alpha\right\} \le \sum_{k = 0}^{m - 1} \binom{N}{k} \alpha^k (1 - \alpha)^{N - k}; \]
roughly speaking, aside from the assumptions on the measurability of some sets, Theorem \ref{ptcthm} assumes that if $Y\subseteq \Xi$ has a positive measure, then for each $k\in [m]$, there exist $\omega_1, \omega_2\in Y$ such that
\[ \mathbb{P}\{\xi\in Y : U^{\omega_1}_k \subseteq U_k^\xi\subseteq U^{\omega_2}_k\} > 0; \]
this assumption resembles the inner regularity of measures on $\mathbb{R}$. Nevertheless, if the chains are induced by functional constraints (i.e., $U_k^\xi = \{x : c_k(x)\le \ell_k(\xi)\}$ for random variables $\ell_k$), then by Corollary \ref{ptccor1} and \ref{ptccor3}, the assumptions of Theorem \ref{ptcthm} are satisfied automatically.

\begin{theorem} \label{thm:erc}
Let $m\le N$ and $\alpha\in [0, 1]$. Suppose $\{f_\xi\}_{\xi\in \Xi}$ has chain-constrained domain of order $m$, then under the assumptions of Theorem \ref{ptcthm},
\begin{equation} \label{eqn:gbc}
\mathbb{P}^N\{\mathfrak{D}(\xi^{[N]}) < 1 - \alpha\}\le \sum_{k = 0}^{m - 1} \binom{N}{k} \alpha^k (1 - \alpha)^{N - k};
\end{equation}
in particular,
\begin{equation} \label{eqn:gb}
\bbP^N\{d(x^*(\xi^{[N]})) < 1 - \alpha\}\le \mathbb{P}^N\{D(\xi^{[N]}) < 1 - \alpha\}\le \sum_{k = 0}^{m - 1} \binom{N}{k} \alpha^k (1 - \alpha)^{N - k}.
\end{equation}
If the chains are induced by functional constraints, then the assumptions of Theorem \ref{ptcthm} are satisfied automatically.
\end{theorem}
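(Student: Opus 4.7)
The plan is to piece together three ingredients, two of which are reductions already set up earlier in this section, and one of which is the quantitative chain bound stated as Corollary~\ref{ptccor2} in the appendix.

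First, I will record the pointwise inequalities $d(x^*(\xi^{[N]})) \ge D(\xi^{[N]}) \ge \mathfrak{D}(\xi^{[N]})$, valid for every realization of $\xi^{[N]}$. The first inequality is the observation immediately following Definition~\ref{Dof}: because the SAA solution lies in $\cap_{i=1}^N \dom f_{\xi^i}$, any $\xi\in\Xi$ witnessing $\cap_{i=1}^N \dom f_{\xi^i}\subseteq \dom f_\xi$ also witnesses $x^*(\xi^{[N]})\in \dom f_\xi$. The second uses the chain decomposition~\eqref{ccdform}: if for every $k\in [m]$ one has $\cap_{i=1}^N U_k^{\xi^i}\subseteq U_k^\xi$, then intersecting over $k$ yields $\cap_{i=1}^N \dom f_{\xi^i}\subseteq \dom f_\xi$. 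Taking sublevel events $\{\,\cdot\, < 1-\alpha\}$ produces the inclusion $\{d < 1-\alpha\}\subseteq\{D < 1-\alpha\}\subseteq\{\mathfrak{D} < 1-\alpha\}$, which reduces \eqref{eqn:gb} to \eqref{eqn:gbc}.

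Second, I will invoke Corollary~\ref{ptccor2}, which under the technical assumptions of Theorem~\ref{ptcthm} delivers exactly the Binomial-tail upper bound \eqref{eqn:gbc} on $\mathbb{P}^N\{\mathfrak{D}(\xi^{[N]}) < 1-\alpha\}$. For the final clause of the theorem, I will appeal to Corollaries~\ref{ptccor1} and~\ref{ptccor3}, which verify that chains of the form $U_k^\xi = \{x : c_k(x)\le \ell_k(\xi)\}$ automatically satisfy the measurability and inner-regularity-type conditions needed by Theorem~\ref{ptcthm}.

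The substantive work is deferred to Appendix~\ref{sec:ptc}. The underlying picture is that, for each $k$, the intersection $\cap_{i=1}^N U_k^{\xi^i}$ is determined by the single sample whose chain element is smallest, so the draw $\xi^{[N]}$ is effectively summarized by at most $m$ ``support'' scenarios; the problem then resembles a scenario program of Helly dimension $m$, and the bound $\sum_{k=0}^{m-1}\binom{N}{k}\alpha^k(1-\alpha)^{N-k}$ is the familiar Campi--Garatti-style scenario estimate. Turning this heuristic into a rigorous estimate for abstract chains of measurable sets is what motivates the inner-regularity assumption in Theorem~\ref{ptcthm}, and is where the bulk of the technical effort lives.
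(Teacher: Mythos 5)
Your proposal is correct and follows essentially the same route as the paper: the pointwise chain $d(x^*(\xi^{[N]}))\ge D(\xi^{[N]})\ge \mathfrak{D}(\xi^{[N]})$ established in the discussion preceding the theorem, the resulting inclusion of the events $\{\cdot < 1-\alpha\}$, the invocation of Corollary~\ref{ptccor2} for the binomial-tail bound, and Corollaries~\ref{ptccor1} and~\ref{ptccor3} for the functional-constraint case. The heavy lifting is, as you note, deferred to Theorem~\ref{ptcthm} in the appendix, exactly as in the paper.
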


\begin{remark}
{\rm
The bound $\sum_{k = 0}^{m - 1} \binom{N}{k} \alpha^k (1 - \alpha)^{N - k}$ does not necessarily depend on the dimension of the variable $x$, which makes it potentially useful in a high-dimensional setting. In section \ref{sec:ccdr}, we show that the dependence on $m$ can be mitigated when the problem is convex.
}
\end{remark}

\begin{remark}
{\rm
The sum $\sum_{k = 0}^{m - 1} \binom{N}{k} \alpha^k (1 - \alpha)^{N - k}$ is the tail probability of the binomial distribution Bin$(N, \alpha)$. For $N\alpha\ge m - 1$, the Chernoff bound~\cite{V18} gives the estimate
\[ \sum_{k = 0}^{m - 1} \binom{N}{k} \alpha^k (1 - \alpha)^{N - k}\le \exp\left\{ -\frac{(N\alpha - m + 1)^2}{2N \alpha} \right\}. \]
}
\end{remark}

\section{The convex case} \label{sec:tcc}

Throughout this section, we assume $\mathcal{X}$ is a closed convex set, the set of optimal solutions $\mathcal{X}^*$ is non-empty and compact, and $f_\xi$ is convex for all $\xi\in \Xi$ (this implies $\dom f_\xi$ is a convex set). In addition, we assume $F > -\infty$, and hence $F$ is a convex function. We also assume that each SAA solution $x^*(\xi^{[N]})$ is an optimal solution of the SAA problem \eqref{saa}.

In the convex case, the feasibility of SAA solutions depends on the local geometry around $\mathcal{X}^*$; the main idea is to combine convexity and the uniform convergence of $\hat{F}_N$ to $F$. A result regarding uniform convergence is summarized in Theorem \ref{thm:uc}, and its proof can be found in \cite[section 7.2.10]{SDR14}.

For each $x\in \dom F$, we define $M_x(t) := \bbE[e^{t(f_\xi(x) - F(x))}]$ to be the moment generating function of the random variable $f_\xi(x) - F(x)$.

\begin{theorem}[{\cite[section 7.2.10]{SDR14}}] \label{thm:uc}
{\rm
Let $X\subseteq \dom F$ be a compact set of diameter $D$. Suppose
\begin{enumerate}
\item[(C1)] For every $x\in X$ the moment generating function $M_x(t)$ is finitely valued for all $t$ in a neighborhood of zero.
\item[(C2)] There exists a (measurable) function $\kappa : \Xi \rightarrow \bbR_+$ such that $\bbE[\kappa(\xi)] = L$ and $f_\xi$ is $\kappa(\xi)$-Lipschitz on $X$ for all $\xi$.
\item[(C3)] The moment generating function $M_\kappa(t) := \bbE[e^{t\kappa(\xi)}]$ of $\kappa(\xi)$ is finitely valued for all $t$ in a neighborhood of zero.
\end{enumerate}
Then for any $\epsilon > 0$, there exist positive constants $C$ and $\beta$, independent of $N$, such that
\[ \bbP^N\left\{\sup_{x\in X} |\hat{F}_N(x) - F(x)|\ge \epsilon \text{ or } \frac{1}{N}\sum_{i = 1}^N \kappa(\xi^i) > 2L\right\}\le Ce^{-N\beta}. \]
Moreover, if assumption (C1) is replaced by
\begin{enumerate}
\item[(C4)] There exists constant $\sigma >0$ such that for any $x\in X$, the following inequality holds:
\[ M_x(t)\le e^{\sigma^2 t^2/2} \stab \forall t\in \bbR, \]
\end{enumerate}
then for some constants $\ell$ and $\rho$,
\[ \bbP^N\left\{\sup_{x\in X} |\hat{F}_N(x) - F(x)|\ge \epsilon\right\} \le \exp(-N\ell) + 2\left[\frac{4\rho DL}{\epsilon}\right]^n \exp\left\{-\frac{N\epsilon^2}{32\sigma^2}\right\}. \]
If $\kappa(\xi)\equiv L$ for all $\xi\in \Xi$, then the term $\exp(-N\ell)$ can be omitted.
}
\end{theorem}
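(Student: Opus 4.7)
The plan is to combine a pointwise large-deviation bound for $\hat{F}_N(x) - F(x)$ with a Lipschitz-based covering argument. First, for each fixed $x \in X$, the random variables $Z_i(x) := f_{\xi^i}(x) - F(x)$ are i.i.d., mean zero, with moment generating function $M_x(t)$ finite in a neighborhood of $0$ by (C1); the standard Cram\'er/Chernoff inequality then produces a rate $\eta(\epsilon', x) > 0$ with
\[ \bbP^N\{|\hat{F}_N(x) - F(x)| \ge \epsilon'\} \le 2e^{-N\eta(\epsilon', x)}. \]
Under the sub-Gaussian assumption (C4) this sharpens to the $x$-independent bound $2\exp(-N\epsilon'^2/(2\sigma^2))$.

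Second, (C3) combined with Chernoff applied to $\kappa(\xi^i) - L$ gives
\[ \bbP^N\Big\{\tfrac{1}{N}\sum_{i=1}^N \kappa(\xi^i) > 2L\Big\} \le e^{-N\ell} \]
for some $\ell > 0$, and on the complementary event $\hat{F}_N$ is $(2L)$-Lipschitz on $X$ (since $|\hat{F}_N(x) - \hat{F}_N(y)| \le \tfrac{1}{N}\sum_i \kappa(\xi^i)\norm{x - y}$), while $F$ is $L$-Lipschitz throughout by (C2) and Jensen. If $\kappa \equiv L$ the Lipschitz event holds with probability one, explaining the final remark of the theorem.

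Third, fix $\delta := \epsilon/(4L)$ and choose a $\delta$-net $\{x_1, \ldots, x_M\} \subseteq X$; a volumetric count yields $M \le (4\rho D L/\epsilon)^n$ for an absolute constant $\rho$. On the Lipschitz event, for any $x \in X$ its nearest net point $x_j$ satisfies
\[ |\hat{F}_N(x) - \hat{F}_N(x_j)| + |F(x) - F(x_j)| \le 3L\delta = 3\epsilon/4, \]
so $\sup_{x \in X}|\hat{F}_N(x) - F(x)| \ge \epsilon$ forces $\max_j |\hat{F}_N(x_j) - F(x_j)| \ge \epsilon/4$. A union bound over the $M$ centers plus the Lipschitz-failure probability delivers, under (C4), exactly the quantitative bound $\exp(-N\ell) + 2[4\rho DL/\epsilon]^n \exp(-N\epsilon^2/(32\sigma^2))$. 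Under (C1)--(C3) the quantities $n, D, L, \epsilon$ are fixed, so the covering count is absorbed into $C$ and the worst-case pointwise rate into $\beta$.

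The main obstacle is that the Cram\'er rate $\eta(\epsilon', x)$ furnished by (C1) is known only pointwise, whereas the union bound requires a uniform positive lower bound $\beta \le \inf_{x \in X}\eta(\epsilon/4, x)$. This follows from a compactness argument: (C1)--(C3) together imply that $(x, t) \mapsto M_x(t)$ is jointly continuous on $X \times (-t_0, t_0)$ for some $t_0 > 0$, using the envelope $|f_\xi(x) - f_\xi(x')| \le \kappa(\xi)\norm{x - x'}$ together with the uniform integrability from (C3); the Fenchel--Legendre rate is then lower semicontinuous in $x$ and attains its positive infimum on the compact $X$. Measurability of $\sup_{x \in X}|\hat{F}_N - F|$ is automatic since $\hat{F}_N - F$ is continuous on the compact $X$, so the supremum can be restricted to a countable dense subset.
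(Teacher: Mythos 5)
The paper does not prove this theorem itself---it is imported verbatim from \cite[section 7.2.10]{SDR14}---and your argument is precisely the standard proof given there: a pointwise Cram\'er/Chernoff bound at the points of an $\epsilon/(4L)$-net of $X$, a large-deviation bound on the event $\frac{1}{N}\sum_i \kappa(\xi^i) > 2L$ (off which $\hat{F}_N$ is $2L$-Lipschitz and $F$ is $L$-Lipschitz) to pass from the net to all of $X$, and a union bound over the $M\le (4\rho DL/\epsilon)^n$ net points, reproducing the stated constants exactly. The one quibble is that your ``main obstacle'' is illusory: the union bound runs over the \emph{finite} net, so you only need the minimum of finitely many positive Cram\'er rates (which may depend on $\epsilon$ and the net---the theorem only requires $\beta$ to be independent of $N$), and the joint-continuity/compactness detour for $\inf_{x\in X}\eta(\epsilon/4,x)$ is unnecessary.
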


A thorough discussion of assumptions (C1) - (C4) can be found in \cite[section 5.3]{SDR14}. Basically, (C1), (C3) and (C4) assume the existence of moment generating functions in a neighborhood of $0$ in order to invoke the large deviation theory. Assumptions (C1) and (C3) hold, for example, if the corresponding random variables are sub-exponential~\cite{V18}.

\subsection{Solutions in the interior}

When $\mathcal{X}^*$ is contained in the interior of $\dom F$, the uniform convergence alone can guarantee that the portion of infeasible SAA solutions (i.e., $\mathbb{P}^N\{d(x^*(\xi^{[N]})) < 1\}$) decays exponentially in $N$. In particular, the result, which relies on Lemma \ref{lem:ury}, applies to general convex functions $\{f_\xi\}$ without chain-constrained domain.

\begin{lemma} \label{lem:ury}
Let $U$ be a compact convex set and $V$ be an open set such that $U\subset V$, then there exists a compact convex set $W$ such that $U\subset W\subset V$ and $U\cap \sbd W = \varnothing$.
\end{lemma}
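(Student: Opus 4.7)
The plan is to construct $W$ as an $\epsilon$-thickening of $U$ via Minkowski sum with a closed Euclidean ball. Since $U$ is compact and $V^c$ is closed and disjoint from $U$, the distance $\delta := \text{dist}(U, V^c) = \inf_{u\in U, v\in V^c} \norm{u - v}$ is strictly positive, with the convention $\delta = +\infty$ if $V = \bbR^n$. Fix any $\epsilon \in (0, \delta)$ and set
\[
W := U + \epsilon \overline{B} = \{x \in \bbR^n : \text{dist}(x, U) \le \epsilon\},
\]
where $\overline{B}$ denotes the closed unit ball at the origin. This single construction will yield all four required properties.

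Verification is then routine. Compactness and convexity of $W$ are immediate because $W$ is the Minkowski sum of two compact convex sets. The strict inclusion $U \subsetneq W$ holds because $W$ has diameter $\text{diam}(U) + 2\epsilon$ (we may assume $U$ nonempty; otherwise the lemma is trivial after picking any small closed ball in $V$). For $W \subsetneq V$, any $w \in W$ satisfies $\text{dist}(w, U) \le \epsilon < \delta$, so $w \notin V^c$; the inclusion is strict because $W$ is closed and bounded, whereas the only clopen subset of $\bbR^n$ is $\varnothing$ or $\bbR^n$, neither of which can equal the bounded open candidate $V = W$. Finally, every $u \in U$ satisfies $\text{dist}(u, U) = 0 < \epsilon$, so $u$ lies in the interior of $W$ by the characterization $\sint W = \{x : \text{dist}(x, U) < \epsilon\}$, giving $U \cap \sbd W = \varnothing$.

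There is no substantive obstacle: the argument is the textbook Minkowski-fattening trick, and the bookkeeping on closures and distances is elementary. The only point that deserves an explicit line of justification is the identity $\sbd W = \{x : \text{dist}(x, U) = \epsilon\}$ (equivalently $\sint W = \{x : \text{dist}(x, U) < \epsilon\}$), which follows from continuity of $x \mapsto \text{dist}(x, U)$ together with the observation that from any point at distance exactly $\epsilon$ from $U$ one can move inward along the segment to the nearest point of $U$ and outward along the opposite ray to leave $W$ on either side.
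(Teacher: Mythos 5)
Your proof is correct and is essentially the paper's own argument: the set $W = U + \epsilon\overline{B} = \{x : \operatorname{dist}(x,U)\le\epsilon\}$ with $\epsilon$ below $\operatorname{dist}(U,V^c)$ is exactly the paper's $\operatorname{dist}_U^{-1}[0,r/2]$, and the verification of compactness, convexity, the inclusions, and $U\cap\text{bd}\,W=\varnothing$ proceeds the same way (you justify convexity via the Minkowski sum of convex sets where the paper uses convexity of the distance function, a cosmetic difference). For the last point you only need the easy inclusion $\{x:\operatorname{dist}(x,U)<\epsilon\}\subseteq\operatorname{int}W$, so the full identity for $\text{bd}\,W$ you flag as needing justification is not actually required.
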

\begin{proof}
Since $U$ is convex, the function dist$_U$ is convex, and so it is also continuous. Since $U$ is compact and $V^c$ is closed,
\[ r := \inf_{y\in V^c} \text{dist}_U(y) > 0. \]
Consider
\[ W := \{x : \text{dist}_U(x)\le r/2\} = \text{dist}_U^{-1}[0, r/2]. \]
Since $U$ is compact and dist$_U$ is convex and continuous, $W$ is a compact convex set. Moreover,
\[ U = \text{dist}_U^{-1}\{0\}\subset W\subset \text{dist}_U^{-1}[0, r)\subseteq V \]
and
\[ U\cap \sbd W = \text{dist}_U^{-1}(\{0\}\cap \{r/2\}) = \varnothing. \]
\end{proof}

\begin{theorem} \label{thm:cib}
Suppose $\{f_\xi\}$ is a collection of convex functions that satisfy assumptions (C1), (C2), and (C3) in Theorem \ref{thm:uc} on any compact set $X\subseteq \dom F$. If $\mathcal{X}^*$ is contained in the interior of $\dom F$, then there exist positive constants $C$ and $\beta$, independent of $N$, such that
\[ \bbP^N\{d(x^*(\xi^{[N]})) < 1\}\le Ce^{-N\beta}. \]
\end{theorem}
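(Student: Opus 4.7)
If $x \in \dom F$ then $F(x) = \bbE[f_\xi(x)] < +\infty$ forces $f_\xi(x) < +\infty$ almost surely, so $d(x) = 1$. Hence it suffices to bound $\bbP^N\{x^*(\xi^{[N]}) \notin \dom F\}$ exponentially, and I will actually pin $x^*(\xi^{[N]})$ into a compact convex neighborhood of $\mathcal{X}^*$ that sits inside $\sint \dom F$, which then automatically lies in $\dom F$.

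\textbf{Constructing the neighborhood.} The set $\mathcal{X}^*$ is compact by hypothesis and convex as the set of minimizers of a convex function on a convex set. Apply Lemma \ref{lem:ury} with $U = \mathcal{X}^*$ and $V = \sint \dom F$ to obtain a compact convex $W$ with $\mathcal{X}^* \subset W \subset \sint \dom F$ and $\mathcal{X}^* \cap \sbd W = \varnothing$; in particular $\mathcal{X}^* \subset \sint W$. If $\mathcal{X} \subseteq W$ the conclusion is deterministic, so assume otherwise. Then $\sbd W \cap \mathcal{X}$ is a compact subset of $\mathcal{X} \setminus \mathcal{X}^*$, and because $F$ attains its minimum only on $\mathcal{X}^*$,
\[ \delta := \inf_{y \in \sbd W \cap \mathcal{X}} F(y) - F^* > 0. \]

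\textbf{Uniform convergence plus convexity.} Since $W \subseteq \dom F$ is compact, hypotheses (C1)-(C3) hold on $W$, and Theorem \ref{thm:uc} supplies constants $C, \beta > 0$ with
\[ \bbP^N\Bigl\{\sup_{x \in W} |\hat F_N(x) - F(x)| \ge \delta/3\Bigr\} \le C e^{-N\beta}. \]
On the complementary event, fix any $\bar x \in \mathcal{X}^*$: then $\hat F_N(\bar x) < F^* + \delta/3$ while $\hat F_N(y) > F^* + 2\delta/3$ for every $y \in \sbd W \cap \mathcal{X}$. For any candidate $x \in \mathcal{X} \setminus W$ the segment $[\bar x, x]$ lies in $\mathcal{X}$ by convexity and crosses $\sbd W$ at some $y = (1-t)\bar x + tx$ with $t \in (0, 1)$; convexity of $\hat F_N$ rearranges $\hat F_N(y) \le (1-t)\hat F_N(\bar x) + t \hat F_N(x)$ into $\hat F_N(x) > \hat F_N(\bar x)$ (trivially also when $\hat F_N(x) = +\infty$). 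Hence the SAA optimizer $x^*(\xi^{[N]})$ must lie in $W \subset \dom F$ on this event, and the reduction in the first step yields $d(x^*(\xi^{[N]})) = 1$.

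\textbf{Main obstacle.} The delicate step is producing the strictly positive gap $\delta$: this is exactly what the hypothesis $\mathcal{X}^* \subset \sint \dom F$ buys us through Lemma \ref{lem:ury}, because without interiority the neighborhood $W$ would have to hug $\mathcal{X}^*$ so tightly that $\sbd W$ could meet $\mathcal{X}^*$ and collapse $\delta$ to zero. Once $\delta > 0$ is secured, the convexity squeeze along the segment from $\bar x$ to $x$ and the standard ``thirds'' bookkeeping do the rest mechanically, and the $C e^{-N\beta}$ bound is inherited directly from Theorem \ref{thm:uc}.
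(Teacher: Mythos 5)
Your proposal is correct and follows essentially the same route as the paper's proof: use Lemma \ref{lem:ury} to build a compact convex buffer set around $\mathcal{X}^*$ inside $\sint \dom F$, extract a positive optimality gap on its boundary intersected with $\mathcal{X}$, invoke the uniform convergence of Theorem \ref{thm:uc}, and use convexity along segments from a true optimizer to confine the SAA solution to the buffer set. The only differences are cosmetic (you apply uniform convergence on all of $W$ rather than on $\{z\}\cup(\sbd B_\mathcal{X}\cap\sbd B)$, and you restrict attention to $x\in\mathcal{X}\setminus W$ rather than splitting cases over $B_\mathcal{X}^c$), so there is nothing substantive to add.
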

\begin{proof}
Consider
\[ \mathcal{B} := \{B : B \text{ compact convex}, \mathcal{X}^*\subseteq B\cap \mathcal{X} \subset \sint \dom F, \mathcal{X}^*\cap \sbd B = \varnothing\}. \]
By Lemma \ref{lem:ury}, $\mathcal{B}$ is non-empty. We fix a $B\in \mathcal{B}$ and let $B_\mathcal{X} := B\cap \mathcal{X}$, then $B_\mathcal{X}$ is compact and convex, and $\sbd B_\mathcal{X}\subseteq \sbd B\cup \sbd \mathcal{X}$. By the extreme value theorem, $F$ attains the minimum $F_B^*$ on the compact set $\sbd B_\mathcal{X}\cap \sbd B$.

\begin{figure}[H]
\centering
\begin{tikzpicture}[scale=0.8]
		\draw [densely dotted] (-0.5, -0.5) ellipse (3.5 and 1.5);
		\draw [dotted] (-0.7, 0.8) ellipse (1.5 and 2);
		\fill [gray, fill opacity = 0.1] (1, 0) ellipse (4 and 2.5);
		
		\begin{scope}
		\clip (-1, 1) ellipse (0.3 and 0.3);
		\clip (-0.5, -0.5) ellipse (3.5 and 1.5);
		\draw (-1, 1) ellipse (0.3 and 0.3);
		\draw (-0.5, -0.5) ellipse (3.5 and 1.5);
		\fill [gray, fill opacity = 0.5] (-1, 1) ellipse (0.3 and 0.3);
		\end{scope}
		
		\begin{scope}
		\clip (-0.5, -0.5) ellipse (3.5 and 1.5);
		\clip (-0.7, 0.8) ellipse (1.5 and 2);
		\fill [gray, fill opacity = 0.2] (-0.7, 0.8) ellipse (1.5 and 2);
		\draw [ultra thick] (-0.5, -0.5) ellipse (3.5 and 1.5);
		\draw [ultra thick] (-0.7, 0.8) ellipse (1.5 and 2);
		\end{scope}
		
		\node at (-0.3, 0.6) {$\mathcal{X}^*$};
		\node at (2.5, 1.5) {$\dom F$};
		\node at (-3.3, -2) {$\mathcal{X}$};
		\node at (-2.3, 2.3) {$B$};
		\node at (1, -0.9) {$B_\mathcal{X}$};
\end{tikzpicture}
\caption{Illustration of Theorem \ref{thm:cib}.}
\end{figure}

Fix a $z\in \mathcal{X}^*$, then $F_B^* > F^* = F(z)$ since $\mathcal{X}^*$ and $\sbd B$ are disjoint; moreover, the set $X_B := \{z\}\cup (\sbd  B_\mathcal{X}\cap \sbd B)$ is compact. By Theorem \ref{thm:uc}, there exist positive constants $C_B$ and $\beta_B$ such that
\[ \bbP^N\left\{\sup_{x\in X_B} |\hat{F}_N(x) - F(x)| \ge \frac{F_B^* - F^*}{2}\right\}\le C_B e^{-N \beta_B}. \]
It remains to show
\[ \bbP^N\left\{\sup_{x\in X_B} |\hat{F}_N(x) - F(x)| \ge \frac{F_B^* - F^*}{2}\right\}\ge \bbP^N\{d(x^*(\xi^{[N]})) < 1\}. \]
Note that the uniform convergence $\sup_{x\in X_B} |\hat{F}_N(x) - F(x)| < \tfrac{F_B^* - F^*}{2}$ implies
\[ \min_{x\in \text{bd} B_\mathcal{X}\cap \text{bd} B} \hat{F}_N(x) > \min_{x\in \text{bd} B_\mathcal{X}\cap \text{bd} B} F(x) - \frac{F_B^* - F^*}{2} = F(z) + \frac{F_B^* - F^*}{2} > \hat{F}_N(z). \]
Let $x\in B_\mathcal{X}^c$, there exists the smallest $\lambda\in (0, 1]$ such that $\tilde{x} := \lambda z + (1 - \lambda) x\in \sbd B_\mathcal{X}$. If $\tilde{x}\in \sbd B$, then $\hat{F}_N(\tilde{x}) > \hat{F}_N(z)$; by the convexity of $\hat{F}_N$,
\[ \hat{F}_N(\tilde{x})\le \lambda \hat{F}_N(z) + (1 - \lambda) \hat{F}_N(x) \implies \hat{F}_N(x) > \hat{F}_N(z), \]
thus $x$ cannot be the (optimal) SAA solution. If $\tilde{x}\in (\sbd B_\mathcal{X}\setminus \sbd B)\subseteq (\sbd \mathcal{X}\setminus \sbd B)$, then $x\in \mathcal{X}^c$, i.e., $x$ is infeasible. Hence, under uniform convergence, the SAA solutions $x^*(\xi^{[N]})$ are contained in $B_\mathcal{X} \subseteq \dom F$, and
\[ \bbP^N\left\{\sup_{x\in X_B} |\hat{F}_N(x) - F(x)| < \frac{F_B^* - F^*}{2}\right\}\le \bbP^N\{d(x^*(\xi^{[N]})) = 1\}, \]
or equivalently,
\[ \bbP^N\{d(x^*(\xi^{[N]})) < 1\}\le \bbP^N\left\{\sup_{x\in X_B} |\hat{F}_N(x) - F(x)| \ge \frac{F_B^* - F^*}{2}\right\}\le C_B e^{-N \beta_B}. \]
\end{proof}

\begin{remark}
{\rm
We can modify the proof of Theorem \ref{thm:cib} to obtain different properties of the SAA solutions. Recall
\[ \mathcal{B} := \{B : B \text{ compact convex}, \mathcal{X}^*\subseteq B\cap \mathcal{X} \subset \sint \dom F, \mathcal{X}^*\cap \sbd B = \varnothing\}. \]
Consider $B\in \mathcal{B}$ such that
\begin{enumerate}
\item $B = \{x : F(x)\le F^* + \epsilon\}$ is the $\epsilon$-optimal set of $F$, then the result bounds the probability that SAA solutions are not the $\epsilon$-optimal solutions.

\item $B = \{x : \text{dist}(x, \mathcal{X}^*)\le r\}$ for some radius $r > 0$, then the result bounds the probability that SAA solutions are more than $r$ units away from $\mathcal{X}^*$.
\end{enumerate}
}
\end{remark}

\begin{corollary}
Let $B$ be an $\epsilon$-optimal set of $F$ such that $B \cap \mathcal{X}$ is compact and is contained in the interior of $\dom F$, and let $D$ denote the diameter of $B\cap \mathcal{X}$. Suppose $\{f_\xi\}$ satisfies assumptions (C2), (C3) and (C4) in Theorem \ref{thm:uc} on $B\cap \mathcal{X}$. Then
\[ \bbP^N\left\{F(x^*(\xi^{[N]})) > F^* + \epsilon\right\}\le \exp(-N\ell) + 2\left[\frac{8\rho DL}{\epsilon}\right]^n \exp\left\{-\frac{N\epsilon^2}{128\sigma^2}\right\}, \]
for constants $\ell$ and $\rho$ given in Theorem \ref{thm:uc}.
\end{corollary}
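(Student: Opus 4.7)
The plan is to specialize the proof of Theorem \ref{thm:cib} to $B := \{x : F(x) \le F^* + \epsilon\}$ and then feed the uniform-convergence event into the sub-Gaussian half of Theorem \ref{thm:uc}.

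First I would verify that $B$ belongs to the class $\mathcal{B}$ from the proof of Theorem \ref{thm:cib}. Convexity of $F$ forces continuity on the open set $\sint \dom F$, so since $B \cap \mathcal{X}$ is compact and contained in $\sint \dom F$ by hypothesis, every point of $\sbd B \cap (B \cap \mathcal{X})$ satisfies $F = F^* + \epsilon$; combined with $F \equiv F^*$ on $\mathcal{X}^*$ this gives $\mathcal{X}^* \cap \sbd B = \varnothing$, while $\mathcal{X}^* \subseteq B$ is automatic from the definition of the $\epsilon$-optimal set. In particular $F_B^* = F^* + \epsilon$, so the gap $(F_B^* - F^*)/2$ appearing in the proof of Theorem \ref{thm:cib} equals exactly $\epsilon/2$.

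Next I would run the convexity argument of Theorem \ref{thm:cib} essentially verbatim, with $X_B$ replaced by the slightly larger compact set $B \cap \mathcal{X}$ of diameter $D$. The same staircase reasoning shows that on the event $\sup_{x \in B \cap \mathcal{X}} |\hat{F}_N(x) - F(x)| < \epsilon/2$, every optimal SAA solution is trapped in $B \cap \mathcal{X}$ and therefore satisfies $F(x^*(\xi^{[N]})) \le F^* + \epsilon$. Since $x^*(\xi^{[N]}) \in \mathcal{X}$ always holds, taking the contrapositive yields
\[ \bbP^N\{F(x^*(\xi^{[N]})) > F^* + \epsilon\} \le \bbP^N\left\{\sup_{x \in B \cap \mathcal{X}} |\hat{F}_N(x) - F(x)| \ge \epsilon/2\right\}. \]

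Finally I would invoke the second conclusion of Theorem \ref{thm:uc} (under (C2), (C3), (C4)) applied to the compact set $B \cap \mathcal{X}$ with diameter $D$ and tolerance $\epsilon/2$ in place of $\epsilon$; the substitution $\epsilon \mapsto \epsilon/2$ in $4\rho D L/\epsilon$ and $\epsilon^2/(32\sigma^2)$ produces precisely $8\rho D L/\epsilon$ and $\epsilon^2/(128\sigma^2)$, matching the claimed bound. The only mildly delicate point is the degenerate case $\sbd B \cap \sbd(B \cap \mathcal{X}) = \varnothing$, where the convexity step becomes vacuous because any $x \notin B \cap \mathcal{X}$ must lie in $\mathcal{X}^c$ and hence cannot be an SAA solution; the advertised bound then holds trivially.
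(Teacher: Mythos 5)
Your proposal is correct and follows exactly the route the paper intends: the corollary is the instance $B=\{x:F(x)\le F^*+\epsilon\}$ of the remark preceding it, rerunning the proof of Theorem \ref{thm:cib} with gap $(F_B^*-F^*)/2=\epsilon/2$ on the compact set $B\cap\mathcal{X}$ of diameter $D$, and then invoking the (C4) branch of Theorem \ref{thm:uc} with tolerance $\epsilon/2$, which produces the constants $8\rho DL/\epsilon$ and $\epsilon^2/(128\sigma^2)$. Your handling of $\mathcal{X}^*\cap\sbd B=\varnothing$ and of the degenerate case $\sbd B\cap\sbd(B\cap\mathcal{X})=\varnothing$ is also consistent with the paper's argument.
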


\subsection{Chain-constrained domain revisit} \label{sec:ccdr}

In this section, we investigate the case that $\mathcal{X}^*$ has a non-empty intersection with the boundary of $\dom F$. We assume $\{f_\xi\}$ has chain-constrained domain of order $m$ and
\begin{equation} \label{ccdfcd}
\dom f_\xi = \{x : c_k(x) \le \ell_k(\xi), k\in [m]\}\stab \forall \xi\in \Xi,
\end{equation}
where for each $k$, $c_k$ is a real-valued convex function and $\ell_k$ is a random variable supported on $\Xi$. For $\beta\in [0, 1)$, we define
\[ (\ell_k)_\beta := \inf\{t\in \mathbb{R} : \bbP\{\ell_k \le t\} > \beta\}. \]
By the right continuity of cumulative distribution functions, $\bbP\{\ell_k \le (\ell_k)_\beta\} \ge \beta$. Note that $(\ell_k)_0$ is the essential infimum of $\ell_k$, and that $(\ell_k)_0 > -\infty$ for each $k\in [m]$ since, by assumption, $F(x) < +\infty$ for some $x\in \mathcal{X}$.

In Theorem \ref{thm:erc}, the bound $\sum_{k = 0}^{m - 1} \binom{N}{k} \alpha^k (1 - \alpha)^{N - k}$ was derived for general functions; in stochastic convex programming, the dependence on $m$ can be mitigated. We show two different approaches, one in Example \ref{ex:ob} and the other one in Theorem \ref{cbb}.

\begin{example} \label{ex:ob}
{\rm
Let us revisit Example \ref{ex:ts}. Suppose $\{f_\xi\}$ is a collection of convex functions with the chain-constrained domain
\[ \dom f_\xi = \cap_{i\in [p], j\in J_i, k\in [q]} U_{ijk}^\xi \stab \forall \xi\in \Xi; \]
for each $i\in [p], j\in J_i, k\in [q]$,
\[ U_{ijk}^\xi := \begin{cases} \{x : a_{ijk}^{\sT} x\le b_{ij}(\xi)\} & W_\xi = W_i, T_\xi = T_k \\ \bbR^n & \text{otherwise} \end{cases}, \]
where $a_{ijk}$ are deterministic vectors and $b_{ij}$ are random variables.

Assume $\dom F = \{x : \bbP\{f_\xi(x) < +\infty\} = 1\}$, then
\[ \dom F = \{x : a_{ijk}^{\sT} x\le (b_{ij})_0, i\in [p], j\in J_i, k\in [q]\}. \]
For $\epsilon > 0$, we denote $\mathcal{X}_\epsilon := F^{-1}\{F^* + \epsilon\}$. In Figure \ref{fig1}, $\dom F$ is the polygon; for some $\epsilon_1 > \epsilon_2 > 0$, the sets $\mathcal{X}_{\epsilon_1}$ and $\mathcal{X}_{\epsilon_2}$ are the two dash-dotted curves, respectively; the solid line is part of the boundary of $\dom F$, and $\mathcal{X}^*$ is the dot on the solid line.

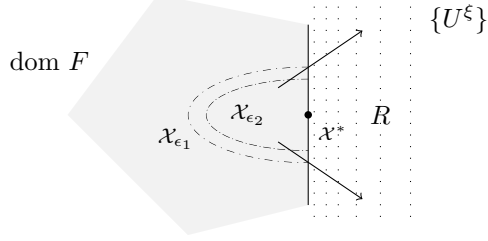
\begin{figure}[H]
\centering
\begin{tikzpicture}[scale=0.8]
      \fill [gray, fill opacity = 0.1] (2, -1.5) -- (0, -2) -- (-1, -1) -- (-2, 0) -- (-0.5, 2) -- (2, 1.5);
      \draw (2, 1.5) -- (2, -1.5);
      \node at (2, 0) [circle,fill,inner sep=1pt]{};
      \draw[loosely dotted] (2.1, 1.8) -- (2.1, -1.8);
      \draw[loosely dotted] (2.3, 1.8) -- (2.3, -1.8);
      \draw[loosely dotted] (2.5, 1.8) -- (2.5, -1.8);
      \draw[loosely dotted] (2.8, 1.8) -- (2.8, -1.8);
      \draw[loosely dotted] (3.2, 1.8) -- (3.2, -1.8);
      \draw[loosely dotted] (3.7, 1.8) -- (3.7, -1.8);
      \draw[->] (1.5, 0.45) -- (2.9, 1.4);
      \draw[->] (1.5, -0.45) -- (2.9, -1.4);
      
      \begin{scope}
		\clip (2, -1.5) -- (0, -2) -- (-1, -1) -- (-2, 0) -- (-0.5, 2) -- (2, 1.5);
		\clip (2, 0) ellipse (2 and 0.8);
		\draw [dashdotted] (2, 0) ellipse (2 and 0.8);
		\end{scope}
		
      \begin{scope}
		\clip (2, -1.5) -- (0, -2) -- (-1, -1) -- (-2, 0) -- (-0.5, 2) -- (2, 1.5);
		\clip (2, 0) ellipse (1.7 and 0.6);
		\draw [densely dashdotted] (2, 0) ellipse (1.7 and 0.6);
		\end{scope}
      
      \node at (-2.3, 0.9) {$\dom F$};
      \node at (-0.2, -0.4) {\footnotesize $\mathcal{X}_{\epsilon_1}$};
      \node at (1, 0) {\footnotesize $\mathcal{X}_{\epsilon_2}$};
      \node at (2.4, -0.3) {\scriptsize $\mathcal{X}^*$};
      \node at (3.2, 0) {$R$};
      \node at (4.5, 1.6) {$\{U^\xi\}$};
    \end{tikzpicture}
\caption{Illustration of Example \ref{ex:ob}. The arrows pass through $\mathcal{X}_{\epsilon_2}$ and $\mathcal{X}_{\epsilon_1}$ consecutively. The dotted lines correspond to the half spaces in the chain $\{U^\xi\}$.} \label{fig1}
\end{figure}

Let $\xi^{[N]}$ be a sample such that $\hat{F}_N$ approximates $F$ uniformly on $\mathcal{X}_{\epsilon_1}\cup \mathcal{X}_{\epsilon_2}$ by an error less than $(\epsilon_1 - \epsilon_2)/2$, i.e,
\[ \sup_{x\in \mathcal{X}_{\epsilon_1}\cup \mathcal{X}_{\epsilon_2}} |F(x) - \hat{F}_N(x)| < \frac{\epsilon_1 - \epsilon_2}{2}, \]
then
\[ \inf_{x\in \mathcal{X}_{\epsilon_1}} \hat{F}_N(x) > \inf_{x\in \mathcal{X}_{\epsilon_1}} F(x) - \frac{\epsilon_1 - \epsilon_2}{2} = \sup_{x\in \mathcal{X}_{\epsilon_2}} F(x) + \frac{\epsilon_1 - \epsilon_2}{2} > \sup_{x\in \mathcal{X}_{\epsilon_2}} \hat{F}_N(x). \]
Let $x_1\in \mathcal{X}_{\epsilon_1}$, $x_2\in \mathcal{X}_{\epsilon_2}$, and $\lambda\in (0, 1)$, and consider $x$ such that $x_1 = \lambda x_2 + (1 - \lambda) x$. Since $\hat{F}_N(x_2) < \hat{F}_N(x_1)$,
\[ \hat{F}_N(x_1)\le \lambda \hat{F}_N(x_2) + (1 - \lambda) \hat{F}_N(x) \implies \hat{F}_N(x) > \hat{F}_N(x_2), \]
which implies $x$ cannot be the (optimal) SAA solution; in other words, if one draws an arrow that passes through $\mathcal{X}_{\epsilon_2}$ and $\mathcal{X}_{\epsilon_1}$ consecutively, then it cannot point toward an SAA solution. In Figure \ref{fig1}, the possible location of SAA solutions is in either $\dom F$ or a region $R$ between the arrows. Suppose the dotted lines correspond to the chain $U^\xi = \{x : a^{\sT} x\le b(\xi)\}$ and the solid line is the hyperplane $\{x : a^{\sT} x = (b)_0\}$. Let $\alpha > 0$ such that
\[ R\cap \{x : a^{\sT} x\le (b)_\alpha\} \subseteq \{x : a_{ijk}^{\sT} x\le (b_{ij})_0, a_{ijk}\ne a\}, \]
i.e., the set $R\cap \{x : a^{\sT} x\le (b)_\alpha\}$ does not intersect the hyperplanes in other chains. If $\xi^{[N]}$ is a sample such that $\min_{i\in [N]} b(\xi^i)\le (b)_\alpha$, then the SAA solution $x^*(\xi^{[N]})$ lies in either $R\cap \{x : a^{\sT} x\le (b)_\alpha\}$ or $\dom F$, which implies $d(x^*(\xi^{[N]})) \ge 1 - \alpha$. Note that a random sample $\xi^{[N]}$ satisfies $\min_{i\in [N]} b(\xi^i)\le (b)_\alpha$ with probability at least $1 - (1 - \alpha)^N\ge 1 - e^{-\alpha N}$; if the aforementioned uniform approximation occurs with probability $1 - O(e^{-cN})$ for some constant $c$, then by a union bound,
\[ \mathbb{P}^N\{d(x^*(\xi^{[N]})) < 1 - \alpha\} = O(e^{-\min\{\alpha, c\} N}). \]
}
\end{example}

The following theorem adopts an approach similar to that of Theorem \ref{thm:cib}. It states that, under regularity conditions, the $m$ in the bound $\sum_{k = 0}^{m - 1} \binom{N}{k} \alpha^k (1 - \alpha)^{N - k}$ can be replaced by $|J|$, where $J$ is the index set of active constraints at $\mathcal{X}^*$, i.e.,
\[ J := \{k\in [m] : \exists x\in \mathcal{X}^*, c_k(x) = (\ell_k)_0\}. \]

\begin{theorem} \label{cbb}
Let $\dom f_\xi$ be in the form \eqref{ccdfcd} and let $J$ denote the index set of active constraints at $\mathcal{X}^*$. Suppose there exists a function $\kappa : \Xi \rightarrow \bbR_+$ such that $\bbE[\kappa(\xi)] = L$ and $f_\xi$ is $\kappa(\xi)$-Lipschitz continuous on $\dom f_\xi$ for every $\xi\in \Xi$. Suppose in addition that $\{f_\xi\}$ satisfies assumptions (C1) and (C3) in Theorem \ref{thm:uc} on any compact set $X\subset \dom F$. Then there exist $\bar{\alpha} > 0$ and positive constants $C$ and $\beta$ (independent of $N$) such that for each $N\ge |J|$ and $\alpha\in [0, \bar{\alpha})$,
\[ \mathbb{P}\{d(x^*(\xi^{[N]})) < 1 - \alpha\}\le Ce^{-N\beta} + \sum_{k = 0}^{|J| - 1} \binom{N}{k} \alpha^k (1 - \alpha)^{N - k}. \]
\end{theorem}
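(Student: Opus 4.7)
The plan is to combine the convexity/uniform-convergence localization idea behind Theorem \ref{thm:cib} with the chain-constrained bound of Theorem \ref{thm:erc} applied only to the $|J|$ chains corresponding to active constraints. Because the inactive constraints $k \notin J$ are strictly satisfied on $\mathcal{X}^*$, they remain safe on an entire neighborhood of $\mathcal{X}^*$ and contribute nothing to the failure probability, provided $x^*(\xi^{[N]})$ can be confined to that neighborhood.

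\textbf{Step 1 (localization).} For each $k \notin J$, continuity of $c_k$ and compactness of $\mathcal{X}^*$ give $\max_{x \in \mathcal{X}^*} c_k(x) < (\ell_k)_0$, hence there exists $r > 0$ such that $B_r := \{x : \text{dist}(x, \mathcal{X}^*) \le r\}$ satisfies $c_k(x) < (\ell_k)_0$ for all $x \in B_r$ and all $k \notin J$. I would then show that on a uniform-convergence event $A_N$ of probability at least $1 - C e^{-N\beta}$, $x^*(\xi^{[N]}) \in B_r \cap \mathcal{X}$. The argument adapts Theorem \ref{thm:cib}: fix $z \in \mathcal{X}^*$ and consider $Y := \sbd B_r \cap \mathcal{X} \cap \dom F$; since $Y \cap \mathcal{X}^* = \varnothing$, the convex continuous $F$ attains on the compact set $Y$ a minimum $F_Y^* > F^*$. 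Applying Theorem \ref{thm:uc} on $\{z\} \cup Y \subseteq \dom F$, with $(\mathrm{C2})$ supplied by the global Lipschitz hypothesis and $(\mathrm{C1}), (\mathrm{C3})$ assumed, yields $\inf_Y \hat{F}_N > \hat{F}_N(z)$ on $A_N$, and convexity of $\hat{F}_N$ then rules out SAA optima in $(\mathcal{X} \setminus B_r) \cap \dom F$. To exclude optima sitting in $\dom \hat{F}_N \setminus \dom F$ past $B_r$, one combines the Lipschitz bound on $\hat{F}_N$ over $\dom \hat{F}_N$ (constant $N^{-1}\sum_i \kappa(\xi^i)$, concentrating near $L$ by $(\mathrm{C3})$) with the exponential concentration of the order statistics $\min_i \ell_k(\xi^i)$ near $(\ell_k)_0$; together these confine such stray candidates to a shrinking tube around $\sbd \dom F$ that fits inside $B_r$, which is where the threshold $\bar\alpha$ originates.

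\textbf{Step 2 (chain bound and conclusion).} On $A_N$ the inactive constraints $c_k(x^*) \le \ell_k(\xi)$ hold for almost every $\xi$, so
\[
d(x^*(\xi^{[N]})) = \bbP\bigl\{\xi \in \Xi : c_k(x^*) \le \ell_k(\xi),\ k \in J\bigr\}.
\]
Since $x^* \in \dom \hat{F}_N$ forces $c_k(x^*) \le \min_{i \in [N]} \ell_k(\xi^i)$ for each $k$, we obtain $d(x^*) \ge \mathfrak{D}_J(\xi^{[N]}) := \bbP\{\xi : \min_i \ell_k(\xi^i) \le \ell_k(\xi),\ k \in J\}$. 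Applying Theorem \ref{ptcthm} (via Corollary \ref{ptccor2}) to the $|J|$ active chains $\{U_k^\xi\}_{k \in J}$---whose regularity hypotheses hold automatically by Corollaries \ref{ptccor1} and \ref{ptccor3}, since the chains are induced by functional constraints---yields
\[
\bbP^N\{\mathfrak{D}_J(\xi^{[N]}) < 1-\alpha\} \le \sum_{k=0}^{|J|-1} \binom{N}{k} \alpha^k (1-\alpha)^{N-k}.
\]
A union bound over $A_N^c$ and $\{\mathfrak{D}_J < 1-\alpha\}$ produces the stated inequality.

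\textbf{Main obstacle.} The technical heart is the second half of Step 1. Unlike in Theorem \ref{thm:cib}, here $\mathcal{X}^* \cap \sbd \dom F \ne \varnothing$, so convexity alone cannot prevent SAA candidates from lying in $\dom \hat{F}_N \setminus \dom F$. The hypothesis that $f_\xi$ is $\kappa(\xi)$-Lipschitz on all of $\dom f_\xi$ (not merely on each compact subset of $\dom F$) is precisely what allows these escape routes to be controlled, at the cost of restricting $\alpha$ to a small interval $[0, \bar\alpha)$ determined by the margin $\min_{k \notin J}\bigl[(\ell_k)_0 - \max_{x \in \mathcal{X}^*} c_k(x)\bigr]$ and the concentration rate of the empirical minima $\min_i \ell_k(\xi^i)$.
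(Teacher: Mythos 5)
Your overall architecture matches the paper's: localize $x^*(\xi^{[N]})$ to a compact convex neighborhood of $\mathcal{X}^*$ on which the inactive constraints are strictly satisfied, then observe that on that event $d(x^*(\xi^{[N]}))\ge \mathfrak{D}_r(\xi^{[N]})$ for the $|J|$ active chains and invoke Corollary \ref{ptccor3}. Your Step 2 is correct and is exactly the paper's equations \eqref{eqdD}--\eqref{eqDr} (the paper cites Corollary \ref{ptccor3}, not \ref{ptccor2}, since the chains are sublevel sets, but that is cosmetic).

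The genuine gap is in the second half of your Step 1 — precisely the part you flag as the ``main obstacle.'' Your proposed mechanism (exponential concentration of $\min_i \ell_k(\xi^i)$ near $(\ell_k)_0$, plus the Lipschitz bound on $\hat{F}_N$, confining stray candidates to a shrinking tube around $\sbd \dom F$ inside $B_r$) is not the paper's mechanism and is not established by what you write. Two problems. First, the set $\dom \hat{F}_N\setminus \dom F = \{x : (\ell_k)_0 < c_k(x)\le \min_i\ell_k(\xi^i) \text{ for some } k\}\cap\dom\hat F_N$ is a sublevel-set shell whose geometric width for a fixed level increment $\delta$ depends on the functions $c_k$ (it can be an unbounded slab, e.g.\ $c(x)=\max(0,\epsilon x_1)$), so ``$\min_i\ell_k(\xi^i)\le(\ell_k)_0+\delta$'' does not by itself confine candidates to a tube that fits inside $B_r$, and no argument is given to exclude SAA optima in the part of this shell far from $\mathcal{X}^*$. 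Second, introducing a separate concentration event for the order statistics is unnecessary and obscures where $\bar\alpha$ comes from. The paper's device is the function $\gamma(x):=\mathbb{P}\{\ell_j<c_j(x)\text{ for some }j\in J\}$: it proves $\gamma$ is lower semi-continuous, hence attains a positive minimum $\bar\alpha$ on the compact piece $B_c:=\sbd B_{\mathcal X}\cap\sbd B\cap\{\mathrm{dist}(\cdot,B_b)\ge\epsilon/6L\}$ of the boundary lying outside $\dom F$; then, for $x\notin B_{\mathcal X}$ whose segment from $z\in\mathcal{X}^*$ exits through $\tilde x\in B_c$, convexity of $c_j$ gives $\gamma(x)\ge\gamma(\tilde x)\ge\bar\alpha$, and the \emph{same} event $\{\mathfrak{D}_r(\xi^{[N]})\ge 1-\alpha\}$ with $\alpha<\bar\alpha$ already forces $c_j(x)>\min_i\ell_j(\xi^i)$ for some $j\in J$, i.e.\ $x\notin\dom\hat F_N$. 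No third event and no union bound beyond the two you already have is needed, and $\bar\alpha=\min_{B_c}\gamma$ is a purely distributional/geometric quantity independent of $N$. Without this (or an equivalent) argument, your proof does not close the case of candidates in $\dom\hat F_N\setminus\dom F$ whose exit point from the localizing set lies away from $\dom F$.
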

\begin{proof}
Let $x_1\in\dom F$, and consider $x_2$ such that $\mathbb{P}\{f_\xi(x_2) < +\infty\} = 1$, then
\[ |F(x_1) - F(x_2)|\le \mathbb{E}|f_\xi(x_1) - f_\xi(x_2)|\le \mathbb{E}[\kappa(\xi)\norm{x_1 - x_2}] = L\norm{x_1 - x_2}; \]
this implies $F$ is $L$-Lipschitz continuous on $\dom F$, and
\begin{align*}
\dom F &= \{x : \mathbb{P}\{f_\xi(x) < +\infty\} = 1\} \\
		&= \{x : c_k(x) \le (\ell_k)_0 \text{ for all } k\in [m]\},
\end{align*}
which is a closed convex set.

Since $c_k$ is a continuous function, $c_k^{-1}(-\infty, (\ell_k)_0)$ is an open set. For any $k\not\in J$, $c_k^{-1}\{(\ell_k)_0\}\cap \mathcal{X}^* = \varnothing$, hence $\mathcal{X}^*\subseteq \cap_{k\not\in J} c_k^{-1}(-\infty, (\ell_k)_0)$. Consider
\[ \mathcal{B} := \{B :B \text{ compact convex}, \mathcal{X}^*\subseteq B\cap \mathcal{X}\subset \cap_{k\not\in J} c_k^{-1}(-\infty, (\ell_k)_0), \mathcal{X}^*\cap \sbd B = \varnothing\}. \]
By Lemma \ref{lem:ury}, $\mathcal{B}$ is non-empty. We fix a $B\in \mathcal{B}$ and let $B_\mathcal{X} := B\cap \mathcal{X}$, then $B_\mathcal{X}$ is compact and convex, and $\sbd B_\mathcal{X}\subseteq \sbd B\cup \sbd \mathcal{X}$. Fix a $z\in \mathcal{X}^*$, we proceed by considering the following three disjoint  subsets of $\sbd B_\mathcal{X}$:

\begin{figure}[H]
\centering
\begin{tikzpicture}[scale=0.8]
		\draw [loosely dotted] (-1, 0) ellipse (3.5 and 2.3);
		\draw [loosely dotted] (0, 1) ellipse (4.3 and 2.5);
		\draw [dotted] (-2, 1.8) ellipse (1 and 1);
		
		\begin{scope}
		\clip (0, 1) ellipse (4.3 and 2.5);
		\fill [gray, fill opacity = 0.2] (-0.2, -0.5) ellipse(2.7 and 3);
		\end{scope}
		
		\begin{scope}
		\clip (0, 1) ellipse (4.3 and 2.5);
		\clip (-0.2, -0.5) ellipse(2.7 and 3);
		\clip (-2, 1.8) ellipse (1 and 1);
		\clip (-1, 0) ellipse (3.5 and 2.3);
		\draw [ultra thick] (-2, 1.8) ellipse (1 and 1);
		\end{scope}
		
		\begin{scope}
		\clip (-2, 1.8) ellipse (1 and 1);
		\clip (-1, 0) ellipse (3.5 and 2.3);
		\draw [ultra thick, densely dashdotted] (-1, 0) ellipse (3.5 and 2.3);
		\end{scope}
		
		\begin{scope}
		\clip (-3.5, 2.8) -- (-3.5, 0) -- (-1.5, 2.8) -- cycle;
		\clip (-1, 0) ellipse (3.5 and 2.3);
		\draw [ultra thick] (-2, 1.8) ellipse (1 and 1);
		\end{scope}
		
		\begin{scope}
		\clip (0, 1) ellipse (4.3 and 2.5);
		\clip (-0.2, -0.5) ellipse(2.7 and 3);
		\fill [gray, fill opacity = 0.5] (-2, 1.8) ellipse (0.2 and 0.2);
		\end{scope}
		
		\node at (-4.3, -1.5) {$\mathcal{X}$};
		\node at (-1.1, 2.8) {$B$};
		\node at (-2.1, 2.4) {\scriptsize $B_a$};
		\node at (-1.1, 0.9) {\scriptsize $B_b$};
		\node at (-3.1, 1.3) {\scriptsize $B_c$};
		\node at (-1.6, 1.6) {\scriptsize $\mathcal{X}^*$};
		\node at (0.8, -0.2) {$\dom F$};
		\node at (-5.3, 2.9) {$\cap_{k\not\in J} c_k^{-1}(-\infty, (\ell_k)_0)$};
\end{tikzpicture}
\caption{Illustration of Theorem \ref{cbb}.}
\end{figure}

\begin{enumerate}

\item $B_a := \sbd B_\mathcal{X} \setminus \sbd B$. Note that $B_a\subseteq \sbd \mathcal{X}\setminus \sbd B$.

\item $B_b := \sbd B_\mathcal{X} \cap \sbd B \cap \dom F$.

Let
\[ \epsilon := \min_{x\in B_b} F(x) - F^* > 0 \]
Consider the compact set $\{z\}\cup B_b$; by Theorem \ref{thm:uc}, for some positive constants $C_B$ and $\beta_B$,
\begin{equation} \label{eqq0}
\bbP^N\left\{\sup_{x\in \{z\}\cup B_b} |\hat{F}_N(x) - F(x)|\ge \frac{\epsilon}{3} \text{ or } \frac{1}{N}\sum_{i = 1}^N \kappa(\xi^i) > 2L\right\}\le C_B e^{-N\beta_B}.
\end{equation}
Since the complement of the event in \eqref{eqq0} implies
\begin{equation} \label{ceq1}
\min_{x : \text{dist}(x, B_b)\le \epsilon/6L} \hat{F}_N(x) > \hat{F}_N(z),
\end{equation}
we have
\begin{equation} \label{eq1}
\bbP^N\left\{\min_{x : \text{dist}(x, B_b)\le \epsilon/6L} \hat{F}_N(x) \le \hat{F}_N(z)\right\}\le C_B e^{-N\beta_B}.
\end{equation}

\item $B_c := \sbd B_\mathcal{X}\cap \sbd B\cap \{x : \text{dist}(x, B_b)\ge \epsilon/6L\}$.

We define
\begin{equation} \label{gamma}
\gamma(x) := \mathbb{P}\{\ell_j < c_j(x) \text{ for some } j\in J\}.
\end{equation}
Since $B_c\cap B_b = \varnothing$, we have $B_c\cap \dom F = \varnothing$ and $\gamma > 0$ on $B_c$. If we can show that $\gamma$ is lower semi-continuous, then $\gamma$ attains its minimum $\bar{\alpha} > 0$ on the compact set $B_c$. To see that $\gamma$ is lower semi-continuous: let $x\in \mathbb{R}^n$ and $\eta > 0$, for every $j\in J$, we can find a $t_j < c_j(x)$ such that $\mathbb{P}\{t_j\le \ell_j < c_j(x)\}\le \tfrac{\eta}{|J|}$; since $c_j$ is continuous, there exists $\delta_j > 0$ such that $c_j(y)\ge t_j$ whenever $\norm{y - x} < \delta_j$; let $\delta = \min_{j\in J} \delta_j$, then for all $y$ such that $\norm{y - x} < \delta$, we have
\[ \gamma(y) \ge \gamma(x) - \sum_{j\in J} \mathbb{P}\{t_j\le \ell_j < c_j(x)\} \ge \gamma(x) - \eta; \]
since $\eta > 0$ is arbitrary, $\gamma$ is lower semi-continuous at every $x\in \mathbb{R}^n$.

\end{enumerate}

We give conditions below that the (optimal) SAA solutions are contained in $B_\mathcal{X}$. Let $x\in B_\mathcal{X}^c$, there exists the smallest $\lambda\in (0, 1]$ such that $\tilde{x} := \lambda z + (1 - \lambda) x\in \sbd B_\mathcal{X}$. If $\tilde{x}\in B_a$, then $x\in \mathcal{X}^c$, i.e., $x$ is infeasible. We now assume $\tilde{x}\in \sbd B_\mathcal{X}\setminus B_a\subseteq \sbd B$. Suppose $\text{dist}(\tilde{x}, B_b)\le \epsilon/6L$ and $\min_{x : \text{dist}(x, B_b)\le \epsilon/6L} \hat{F}_N(x) > \hat{F}_N(z)$; since $\hat{F}_N(\tilde{x}) > \hat{F}_N(z)$, the convexity of $\hat{F}_N$ implies $\hat{F}_N(x) > \hat{F}_N(z)$, thus $x$ is not the SAA solution. It remains to address the case $\tilde{x}\in B_c$. We first claim that $\gamma(x)\ge \gamma(\tilde{x})$; indeed, suppose $c_j(\tilde{x}) > \ell_j(\xi)\ge (\ell_j)_0\ge c_j(z)$ for some $\xi\in \Xi$ and $j\in J$ (note that $\ell_j\ge (\ell_j)_0\ a.s.$), then by the convexity of $c_j$, we have $c_j(x) > c_j(\tilde{x}) > \ell_j(\xi)$; it follows that $\gamma(x)\ge \gamma(\tilde{x})$. Let $\alpha\in (0, \bar{\alpha})$, and consider a sample $\xi^{[N]}$ such that
\[ \mathfrak{D}_r(\xi^{[N]}) := \mathbb{P}\left\{\xi\in \Xi : \min_{i\in [N]} \ell_j(\xi^i)\le \ell_j(\xi) \text{ for all } j\in J\right\}\ge 1 - \alpha, \]
then
\[ \mathbb{P}\left\{\ell_j < \min_{i\in [N]} \ell_j(\xi^i) \text{ for some } j\in J\right\} = 1 - \mathfrak{D}_r(\xi^{[N]}) < \bar{\alpha} \le \gamma(\tilde{x})\le \gamma(x), \]
which by \eqref{gamma} implies $c_j(x) > \min_{i\in [N]} \ell_j(\xi^i)$ for some $j\in J$; thus $x\not\in \dom \hat{F}_N$ cannot be the SAA solution. To summarize, we have $x^*(\xi^{[N]})\in B_\mathcal{X}$ under the following two conditions:
\begin{itemize}
\item $\min_{x : \text{dist}(x, B_b)\le \epsilon/6L} \hat{F}_N(x) > \hat{F}_N(z)$;

\item $\mathfrak{D}_r(\xi^{[N]})\ge 1 - \alpha$.
\end{itemize}
Moreover, if $x^*(\xi^{[N]})\in B_\mathcal{X}\subset \cap_{j\not\in J} c_j^{-1}(-\infty, (\ell_j)_0)$, then
\begin{equation}\label{eqdD}
d(x^*(\xi^{[N]})) = \mathbb{P}\{\xi\in \Xi : c_j(x^*(\xi^{[N]}))\le \ell_j(\xi) \text{ for all } j\in J\} \ge \mathfrak{D}_r(\xi^{[N]}).
\end{equation}

By Corollary \ref{ptccor3},
\begin{equation}\label{eqDr}
\mathbb{P}^N\{\mathfrak{D}_r(\xi^{[N]}) < 1 - \alpha\}\le \sum_{k = 0}^{|J| - 1} \binom{N}{k} \alpha^k (1 - \alpha)^{N - k}.
\end{equation}
Combining \eqref{eq1}, \eqref{eqdD} and \eqref{eqDr},
\begin{align*}
	&\ \mathbb{P}^N\{d(x^*(\xi^{[N]})) < 1 - \alpha\} \\
\le&\ \mathbb{P}^N\left\{\min_{x : \text{dist}(x, B_b)\le \epsilon/6L} \hat{F}_N(x) \le \hat{F}_N(z) \text{ or } \mathfrak{D}_r(\xi^{[N]}) < 1 - \alpha\right\} \\
\le&\ C_B e^{-N\beta_B} + \sum_{k = 0}^{|J| - 1} \binom{N}{k} \alpha^k (1 - \alpha)^{N - k}.
\end{align*}
\end{proof}

In the remaining part of this section, we discuss the value of $|J|$. Recall the following result from convex optimization:
\begin{lemma}[{\cite[p. 234]{SDR14}}] \label{lem:sc}
Suppose $f$ and $f_j, j\in [m]$, are real-valued convex functions on $\mathbb{R}^n$, then there exists an index set $I\subseteq [m]$ of size at most $n$ such that the optimal solutions of the convex optimization problem
\[ \min_x \{f(x) : f_j(x)\le 0, j\in [m]\} \]
are also the optimal solutions of
\[ \min_x \{f(x) : f_j(x)\le 0, j\in I\}. \]
Such index set $I$ of the minimum cardinality is called the support indices.
\end{lemma}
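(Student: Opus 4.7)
The plan is to obtain the index set $I$ by applying Helly's theorem in $\mathbb{R}^n$ to the family consisting of the constraint sublevel sets together with a strict sublevel set of the objective, and then to extract a single $I$ valid for all thresholds $\epsilon>0$ via a finiteness argument.

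First I would reduce to the case in which the problem $(P):=\min\{f(x):f_j(x)\le 0,\ j\in[m]\}$ is feasible with finite optimal value $v^*\in\mathbb{R}$ and non-empty optimal set $\mathcal{X}^*_P$; the other cases are vacuous. Set $C_j:=\{x\in\mathbb{R}^n:f_j(x)\le 0\}$ and $L_\epsilon:=\{x\in\mathbb{R}^n:f(x)\le v^*-\epsilon\}$, which are convex for all $j\in[m]$ and $\epsilon>0$. By definition of $v^*$, $L_\epsilon\cap\bigcap_{j=1}^{m}C_j=\emptyset$ for every $\epsilon>0$. For fixed $\epsilon>0$, Helly's theorem applied to the finite convex family $\{L_\epsilon,C_1,\dots,C_m\}$ produces a subfamily of cardinality at most $n+1$ whose intersection is empty. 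If $L_\epsilon$ were absent from this subfamily, then the $C_j$'s alone would have an empty subintersection, forcing $\bigcap_j C_j=\emptyset$ and contradicting feasibility of $(P)$; hence $L_\epsilon$ must be included, paired with at most $n$ constraint sets indexed by some $I_\epsilon\subseteq[m]$.

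The step I expect to be the main obstacle is removing the $\epsilon$-dependence of $I_\epsilon$. Define $\mathcal{F}:=\{I\subseteq[m]:|I|\le n\}$ and $\mathcal{I}_\epsilon:=\{I\in\mathcal{F}:L_\epsilon\cap\bigcap_{j\in I}C_j=\emptyset\}$. Since $L_\epsilon$ grows as $\epsilon$ shrinks, $\mathcal{I}_\epsilon$ is a non-increasing family of non-empty subsets of the finite set $\mathcal{F}$, so it stabilizes at some non-empty $\mathcal{I}_{\epsilon_0}$; I would pick any $I$ in this stable value. Then $L_\epsilon\cap\bigcap_{j\in I}C_j=\emptyset$ for every $\epsilon>0$ (for $\epsilon\le \epsilon_0$ by stabilization, and for $\epsilon>\epsilon_0$ because $L_\epsilon\subseteq L_{\epsilon_0}$), which means no $x$ feasible for $(P_I):=\min\{f(x):f_j(x)\le 0,\ j\in I\}$ satisfies $f(x)<v^*$, i.e., $v^*(P_I)\ge v^*$. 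The reverse inequality $v^*(P_I)\le v^*$ is immediate from dropping constraints, so $v^*(P_I)=v^*$; any $x^*\in\mathcal{X}^*_P$ is then feasible for $(P_I)$ and attains the value $v^*(P_I)$, hence $x^*\in\mathcal{X}^*_{P_I}$. The only corner case worth flagging is $L_\epsilon=\emptyset$ for all $\epsilon>0$ (i.e., $f\ge v^*$ globally), where $I=\emptyset$ trivially suffices.
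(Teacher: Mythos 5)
The paper does not actually prove this lemma: it is imported as a citation to \cite[p.~234]{SDR14}, where it underlies the ``at most $n$ support constraints'' bound of the scenario approach, so there is no in-paper argument to compare against. Your proof is correct and is essentially the standard Helly-type derivation of that result: you apply Helly's theorem to the finite convex family $\{L_\epsilon, C_1,\dots,C_m\}$, note that the empty subfamily of size at most $n+1$ must contain $L_\epsilon$ (else feasibility of $(P)$ is contradicted), and then remove the dependence on $\epsilon$ using the monotonicity of $\epsilon\mapsto\mathcal{I}_\epsilon$ together with the finiteness of $\{I\subseteq[m]:|I|\le n\}$; the stabilization argument is sound, since the range of a monotone map into the subsets of a finite set is a finite chain. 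Two points are worth making explicit. First, what you establish is the inclusion $\mathcal{X}^*_P\subseteq\mathcal{X}^*_{P_I}$, which is the correct reading of the statement (equality of the optimal sets is false in general, as dropping constraints can enlarge the optimal set) and is exactly the form the paper needs to conclude $|J|\le n$ at the end of Section~\ref{sec:tcc}. Second, your dismissal of the degenerate cases is legitimate: if $(P)$ is infeasible, has $v^*=-\infty$, or does not attain its infimum, then $\mathcal{X}^*_P=\varnothing$ and the claim holds vacuously with $I=\varnothing$, while real-valuedness of the $f_j$ guarantees the $C_j$ and $L_\epsilon$ are genuine convex sets so that Helly's theorem applies. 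No gaps.
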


Let us reinterpret stochastic convex programming problems so that Lemma \ref{lem:sc} applies. For each $\xi\in \Xi$, let $\hat{f}_\xi$ be a convex extension of $f_\xi$ to $\bbR^n$. For example, if $f_\xi$ is $\kappa(\xi)$-Lipschitz continuous on $\dom f_\xi$, then
\[ \hat{f}_\xi(x) := \inf_{y\in \dom f_\xi} \left\{f_\xi(y) + \kappa(\xi)\norm{y - x}\right\} \]
is a $\kappa(\xi)$-Lipschitz convex extension of $f_\xi$ by Kirszbraun Theorem~\cite[3.3.9]{NP18}. Let $I_\xi$ denote the indicator function of $\dom f_\xi$. If $\mathbb{E}\big[\hat{f}_\xi(x)\big] > -\infty$ on $\mathbb{R}^n$, then
\[ F(x) = \bbE\left[f_\xi(x)\right] = \bbE\big[\hat{f}_\xi(x) + I_\xi(x)\big] = \bbE\big[\hat{f}_\xi(x)\big] + \bbE\left[I_\xi(x)\right], \]
where
\[ \bbE\left[I_\xi(x)\right] = \begin{cases} 0 & \text{if } c_k(x) \le (\ell_k)_0 \text{ for all } k\in [m] \\ +\infty & \text{otherwise} \end{cases}. \]
Hence,
\[ \min_{x\in \mathcal{X}} F(x) \equiv \min_{x\in \mathcal{X}} \left\{\bbE\big[\hat{f}_\xi(x)\big] : c_k(x) - (\ell_k)_0 \le 0, k\in [m]\right\}, \]
and the two optimization problems share the same set of optimal solutions $\mathcal{X}^*$. Since $\mathbb{E}\big[\hat{f}_\xi(x)\big]$ is a convex function in $x$, if the active constraints at $\mathcal{X}^*$ correspond to the support indices, then $|J|\le n$ by Lemma \ref{lem:sc}.

\section{Feasibility in multistage stochastic programming} \label{sec:msp}

In this section, we extend the notion of degree of feasibility to multistage stochastic programming. Let $T\ge 2$ be the number of stages,  and let $\xi_t, t = 2, \ldots, T$, be a stagewise independent data process, i.e., $\xi_t$'s are mutually independent random vectors defined on the same probability space $(\Omega, \mathcal{F}, \bbP)$ (we may also use $\xi_t$ to denote the outcomes in $\Xi_t := \xi_t(\Omega)$). We denote $\xi_{[t]} := (\xi_1, \ldots, \xi_t)$ to be the history of the process up to time $t$. Consider the $T$-stage stochastic programming problem
\begin{equation} \label{msp}
\begin{split}
\min_{x_1, \pmb{x}_2, \ldots, \pmb{x}_T}\ & \mathbb{E}\left[f_1(x_1) + f_2(\pmb{x}_2(\xi_{[2]}), \xi_2) + \dots + f_T(\pmb{x}_T(\xi_{[T]}), \xi_T)\right] \\
\text{s.t.}\ & x_1\in \mathcal{X}_1, \pmb{x}_t(\xi_{[t]})\in \mathcal{X}_t(\pmb{x}_{t - 1}(\xi_{[t - 1]}), \xi_t), t = 2, \ldots, T.
\end{split}
\end{equation}
Here, the decision vectors $x_t = \pmb{x}_t(\xi_{[t]}) \in \mathbb{R}^{n_t}, t\in [T]$, are functions of the data process $\xi_{[t]}$ up to time $t$; $f_t$ are real-valued functions; and
\[ \mathcal{X}_t(x_{t - 1}, \xi_t) := \{ x_t \ge 0 : A_t x_t + B_t x_{t - 1} = b_{\xi_t} \}, \]
where $A_t$ and $B_t$ are deterministic matrices, and $b_{\xi_t}$ is a random vector supported on $\Xi_t$. The first stage data $f_1$ and $\mathcal{X}_1$ are deterministic. A thorough discussion of multistage stochastic programming can be found in~\cite{SDR14}.

The identical conditional sampling generates an i.i.d. sample $\xi_t^{[N_t]} = (\xi_t^1, \ldots, \xi_t^{N_t})$ of $\xi_t$ for each $t = 2, \ldots, T$. The (directed) scenario tree generated by the sample $\xi^{[N, T]} := (\xi_2^{[N_2]}, \ldots, \xi_T^{[N_T]})$ is constructed by first creating the root node $\xi_1$ and then creating an arc from each node $\xi_{t - 1}^1, \ldots, \xi_{t - 1}^{N_{t - 1}}$ to each node $\xi_t^1, \ldots, \xi_t^{N_t}$, $t = 2, \ldots, T$. The multistage stochastic programming problem induced by the original problem \eqref{msp} on the scenario tree is viewed as the SAA problem of \eqref{msp}; note that the data process of the induced SAA problem is still stagewise independent. We denote $\mathcal{P}_t(\xi^{[N, T]})$ to be the collection of paths from the root to a node at stage $t$, i.e., each $p_{t}\in \mathcal{P}_{t}(\xi^{[N, T]})$ has the form $p_{t} = (\xi_1, \xi_2^{i_2}, \ldots, \xi_{t}^{i_{t}}), i_s\in [N_s]$.

For a generated sample $\xi^{[N, T]}$, we assume there is a mapping that assigns a decision $x_{t}^*(p_{t})$ to each $p_{t} = (\xi_1, \xi_2^{i_2}, \ldots, \xi_{t}^{i_{t}}) \in \mathcal{P}_{t}(\xi^{[N, T]})\ \forall t\in [T]$, such that $x_1^* = x_1^*(\xi_1)\in \mathcal{X}_1$, and for the subpath $p_{s - 1} = (\xi_1, \xi_2^{i_2}, \ldots, \xi_{s - 1}^{i_{s - 1}})$, $s = 2, \ldots, t$, we have $x_s^*(p_s)\in \mathcal{X}_s(x_{s - 1}^*(p_{s - 1}), \xi_s^{i_s})$; for example, one such mapping is given by an optimization algorithm that solves the SAA problem and outputs feasible solutions. It is important to note that the solutions $x_t^*(p_t)$ depend on both the sample $\xi^{[N, T]}$ and the specific path $p_t$ in the scenario tree generated by the sample $\xi^{[N, T]}$.

The topology of the scenario tree (e.g., $\mathcal{P}_t(\xi^{[N, T]})$) is determined by the parameters $N_2, \ldots, N_T$. By fixing $N_t$'s and viewing $\xi^{[N, T]}$ as random samples, we treat each $x^*_t(p_t)$ as a random vector with respect to $\xi^{[N, T]}$. The relatively complete recourse for multistage stochastic programming states that for every $t\le T - 1$, it holds that for almost every $\xi_{[t]}$ and any sequence of feasible solutions $x_i = \pmb{x}_i(\xi_{[i]}), i = 1, \ldots, t$, the set $\mathcal{X}_{t + 1}(x_t, \xi_{t + 1})$ is non-empty for almost every $\xi_{t + 1}$. Without relatively complete recourse, we are interested in the probability that $\mathcal{X}_t(x_{t - 1}^*(p_{t - 1}), \xi_t)$ is non-empty.

\begin{definition} \label{mdof}
{\rm
Let $\xi^{[N, T]}$ be a generated sample. For each $t = 2, \ldots, T$ and $p\in \mathcal{P}_{t - 1}(\xi^{[N, T]})$, the degree of feasibility of $x_{t - 1}^*(p)$ is
\[ d_t(x_{t - 1}^*(p)) := \bbP\{\xi_t\in \Xi_t : \mathcal{X}_t(x_{t - 1}^*(p), \xi_t) \text{ is non-empty}\}. \]
}
\end{definition}

Definition \ref{mdof} coincides with Definition \ref{dof} when $t = T = 2$. We can apply Theorem \ref{thm:erc} to obtain a probabilistic bound on $\min_{p\in \mathcal{P}_{t - 1}(\xi^{[N, T]})} d_t(x_{t - 1}^*(p))$, which is the minimum degree of feasibility over all solutions at stage $t$; here the probability measure is the product measure $\bbP^{N, T} = \bbP^{N_2}\times \dots \times \bbP^{N_T}$ associated with the random sample $\xi^{[N, T]}$.

\begin{corollary} \label{cor:multi}
Consider a $T$-stage problem of the form $\eqref{msp}$; we denote  $m_t$ be the number of extreme rays of the cone $\{r : r^\sT A_t \ge 0\}$ for each $t = 2, \ldots, T$. Let $\xi^{[N, T]} = (\xi_2^{[N_2]}, \ldots, \xi_T^{[N_T]})$ be a random sample such that $N_t\ge m_t$. Then for $I\subseteq \{2, \ldots, T\}$ and $\alpha_t\in [0, 1]$,
\[ \bbP^{N, T}\left\{\min_{p\in \mathcal{P}_{t - 1}(\xi^{[N, T]})} d_t(x_{t - 1}^*(p))\ge 1 - \alpha_t, t\in I \right\}\ge \prod_{t\in I} \left(\sum_{k = m_t}^{N_t} \binom{N_t}{k} \alpha_t^k (1 - \alpha_t)^{N_t - k}\right); \]
in particular, for $\alpha\in [0, 1]$ and $t = 2, \ldots, T$,
\[ \bbP^{N, T}\left\{\min_{p\in \mathcal{P}_{t - 1}(\xi^{[N, T]})} d_t(x_{t - 1}^*(p)) < 1 - \alpha\right\}\le \sum_{k = 0}^{m_t - 1} \binom{N_t}{k} \alpha^k (1 - \alpha)^{N_t - k}. \]
\end{corollary}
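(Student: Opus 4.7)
The plan is to reduce the multistage bound to Theorem \ref{thm:erc} applied independently at each stage $t$, exploiting the stagewise independence of the data process. First, I would recognize the chain-constrained structure at each stage exactly as in Example \ref{ex:ts}: by Farkas' lemma, $\mathcal{X}_t(x_{t-1}, \xi_t) = \{x_t \ge 0 : A_t x_t + B_t x_{t-1} = b_{\xi_t}\}$ is non-empty if and only if $r_{t,j}^\sT B_t x_{t-1} \le r_{t,j}^\sT b_{\xi_t}$ for each of the $m_t$ extreme rays $r_{t,1}, \ldots, r_{t,m_t}$ of the cone $\{r : r^\sT A_t \ge 0\}$. Setting $U_{t,j}^{\xi_t} := \{x : r_{t,j}^\sT B_t x \le r_{t,j}^\sT b_{\xi_t}\}$ gives a functionally-induced chain indexed by $\xi_t \in \Xi_t$ for every $j \in [m_t]$, and
\[ \{x : \mathcal{X}_t(x, \xi_t) \ne \varnothing\} = \bigcap_{j = 1}^{m_t} U_{t,j}^{\xi_t}, \]
so the stage-$t$ feasibility relation has chain-constrained domain of order $m_t$.

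Next, for a generated sample $\xi^{[N,T]}$ I would introduce the stage-$t$ domain feasibility
\[ D_t(\xi_t^{[N_t]}) := \bbP\left\{\xi_t \in \Xi_t : \bigcap_{i = 1}^{N_t} \bigcap_{j = 1}^{m_t} U_{t,j}^{\xi_t^i} \subseteq \bigcap_{j = 1}^{m_t} U_{t,j}^{\xi_t}\right\}, \]
which depends only on $\xi_t^{[N_t]}$. For any path $p \in \mathcal{P}_{t-1}(\xi^{[N,T]})$, the SAA mapping assigns, for each child sample $\xi_t^i$, a feasible $x_t^*(p, \xi_t^i) \in \mathcal{X}_t(x_{t-1}^*(p), \xi_t^i)$; hence $x_{t-1}^*(p)$ lies in $\bigcap_{i,j} U_{t,j}^{\xi_t^i}$, and therefore $d_t(x_{t-1}^*(p)) \ge D_t(\xi_t^{[N_t]})$ uniformly over all paths $p$. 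Since the chains are functionally induced, the assumptions of Theorem \ref{ptcthm} hold automatically, and Theorem \ref{thm:erc} applied at stage $t$ yields
\[ \bbP^{N_t}\{D_t(\xi_t^{[N_t]}) \ge 1 - \alpha_t\} \ge \sum_{k = m_t}^{N_t} \binom{N_t}{k} \alpha_t^k (1 - \alpha_t)^{N_t - k}. \]

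Finally I would combine across stages via stagewise independence. Because each $D_t$ depends only on $\xi_t^{[N_t]}$ and the samples $\xi_2^{[N_2]}, \ldots, \xi_T^{[N_T]}$ are mutually independent, the events $\{D_t(\xi_t^{[N_t]}) \ge 1 - \alpha_t\}$, $t \in I$, are independent under $\bbP^{N,T}$. Using the uniform lower bound $\min_p d_t(x_{t-1}^*(p)) \ge D_t(\xi_t^{[N_t]})$, I obtain
\[ \bbP^{N,T}\left\{\min_{p \in \mathcal{P}_{t-1}(\xi^{[N,T]})} d_t(x_{t-1}^*(p)) \ge 1 - \alpha_t,\ t \in I\right\} \ge \prod_{t \in I} \bbP^{N_t}\{D_t(\xi_t^{[N_t]}) \ge 1 - \alpha_t\}, \]
and lower-bounding each factor by the previous display produces the first inequality of the corollary. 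Specializing to $I = \{t\}$ and passing to complements yields the ``in particular'' inequality. The only point requiring care is the uniformity $d_t(x_{t-1}^*(p)) \ge D_t(\xi_t^{[N_t]})$ over all paths $p$, which is immediate once one observes that the SAA scenario tree forces each stage-$(t{-}1)$ node to be feasible for every one of its stage-$t$ children; beyond that bookkeeping, the argument is a direct invocation of Theorem \ref{thm:erc} together with the independence of the per-stage samples.
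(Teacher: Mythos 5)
Your proposal is correct and follows essentially the same route as the paper: recognize via Farkas' lemma that the sets $\{x_{t-1} : \mathcal{X}_t(x_{t-1},\xi_t)\ne\varnothing\}$ form a chain-constrained domain of order $m_t$ induced by functional constraints, lower-bound $\min_p d_t(x_{t-1}^*(p))$ by a stage-$t$ domain-feasibility quantity depending only on $\xi_t^{[N_t]}$, apply Theorem \ref{thm:erc}, and multiply across stages using stagewise independence. The only cosmetic difference is that you work with the chain-level quantity $\mathfrak{D}$ where the paper uses $D(\xi_t^{[N_t]})$ defined directly on the sets $S_{\xi_t}$, which changes nothing since $\mathfrak{D}\le D$.
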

\begin{proof} For $t = 2, \ldots, T$, consider
\[ S_{\xi_t} := \{x_{t - 1} : \mathcal{X}_t(x_{t - 1}, \xi_t) \text{ is non-empty}\}, \]
then $d_t(x_{t - 1}^*(p)) = \mathbb{P}\{\xi_t\in \Xi_t : x_{t - 1}^*(p)\in S_{\xi_t}\}$. We define
\[ D\left(\xi_t^{[N_t]}\right) := \bbP\{\xi_t\in \Xi_t : \cap_{i = 1}^{N_t} S_{\xi_t^i} \subseteq S_{\xi_t}\}. \]
Since $x_{t - 1}^*(p)\in S_{\xi_t^i}$ for each $p\in \mathcal{P}_{t - 1}(\xi^{[N, T]})$ and $i = 1, \ldots, N_t$, we have
\[ \min_{p\in \mathcal{P}_{t - 1}(\xi^{[N, T]})} d_t(x_{t - 1}^*(p))\ge D\left(\xi_t^{[N_t]}\right). \]
By Farkas' lemma, $S_{\xi_t} = \{x : r_i^\sT B_t x\le r_i^\sT b_{\xi_t}\ \forall i\}$, where $\{r_i\}$ is the set of extreme rays of the polyhedral cone $\{r : r^\sT A_t \ge 0\}$. By viewing $\{S_{\xi_t}\}$ as a chain-constrained domain of order $m_t$, we can apply Theorem \ref{thm:erc} to obtain the bound,
\[ \bbP^{N, T}\left\{D\left(\xi_t^{[N_t]}\right) < 1 - \alpha\right\} = \bbP^{N_t}\left\{D\left(\xi_t^{[N_t]}\right) < 1 - \alpha\right\} \le \sum_{k = 0}^{m_t - 1} \binom{N_t}{k} \alpha^k (1 - \alpha)^{N_t - k}; \]
by stagewise independence,
\[ \bbP^{N, T}\left\{D\left(\xi_t^{[N_t]}\right)\ge 1 - \alpha_t, t\in I\right\}\ge \prod_{t\in I} \left(\sum_{k = m_t}^{N_t} \binom{N_t}{k} \alpha_t^k (1 - \alpha_t)^{N_t - k}\right). \]
Therefore,
\begin{align*}
	&\ \bbP^{N, T}\left\{\min_{p\in \mathcal{P}_{t - 1}(\xi^{[N, T]})} d_t(x_{t - 1}^*(p))\ge 1 - \alpha_t, t\in I \right\} \\
\ge &\ \bbP^{N, T}\left\{D\left(\xi_t^{[N_t]}\right)\ge 1 - \alpha_t, t\in I\right\} \\
\ge &\ \prod_{t\in I} \left(\sum_{k = m_t}^{N_t} \binom{N_t}{k} \alpha_t^k (1 - \alpha_t)^{N_t - k}\right).
\end{align*}
\end{proof}

\section{Conclusions} In situations where the SAA solutions could be infeasible to the true problem \eqref{gsp}, it is shown that for functions with chain-constrained domain, the portion of SAA solutions having a low degree of feasibility decays exponentially when the sample size increases. For convex problems, estimates of this rate can be improved; in particular, when the true solutions are contained in the interior of the domain of the expectation function, the portion of infeasible SAA solutions decays exponentially as the sample size increases, and this result holds even without chain-constrained domain. In the multistage case, the result is extended to bound the minimum degree of feasibility of SAA solutions over all sample paths.

\appendix
\section{Probability on Chains}\label{sec:ptc}

In this section, we address the mathematical derivation of the main concept introduced in section \ref{sec:ccd}, namely the chain-constrained domain. Let us first recall the definition of chains.

\begin{definition}
A binary relation ``$\le_R$'' on a set $X$ is a partial ordering of $X$ if it is transitive (i.e., $x\le_R y$ and $y\le_R z$ implies $x\le_R z$), reflexive (i.e., $x\le_R x$ for every $x\in X$), and anti-symmetric (i.e., $x\le_R y$ and $y\le_R x$ implies $x = y$). If $\le_R$ is a partial ordering on $X$, then $(X, \le_R)$ is called a partially ordered set.
\end{definition}

We do not necessarily need anti-symmetry; instead, if $x\le_R y$ and $y\le_R x$, we write $x =_R y$ and treat them as the equivalence class $\{y : y =_R x\}$. We use $x\ge_R y$ to denote $y\le_R x$, and we use $x <_R y$ to denote $x\le_R y$ and $x\ne_R y$.

\begin{definition}
A non-empty set $C\subseteq X$, where $(X, \le_R)$ is a partially ordered set, is a chain in $X$ if for any $x, y\in C$ we have either $x\le_R y$ or $y\le_R x$.
\end{definition}

Let $(X, \mathcal{A}, \mathbb{P})$ be a probability space. Suppose there exist $m$ binary relations $\le_k, k\in [m]$, such that for each $k$, $X$ is a chain (possibly without anti-symmetry) with respect to the partially ordered set $(X, \le_k)$. We assume the $\sigma$-algebra $\mathcal{A}$ contains sets of the form $\{y\in X : y =_k x\}$ and $\{y\in X : y\le_k x\}$ for each $x\in X$ and $k\in [m]$ (note that this is similar to the construction of the Borel $\sigma$-algebra of $\mathbb{R}$). More assumptions on measurability will be stated when needed.

Let $\mathbf{x} = (x_i)_i\in X^N$ be an $N$-tuple, where $N\ge m$. We define $h_k(\mathbf{x})$ to be the minimum of $x_i$'s under the relation $\le_k$, i.e., $h_k$ is an entry $y$ of $\mathbf{x}$ such that $y\le_k x_i$ for all $i\in [N]$. Consider the function
\[ \mathfrak{D}(\mathbf{x}) = \mathbb{P}\{x\in X : h_k\le_k x \text{ for all } k\in [m]\}; \]
in plain words, the function $\mathfrak{D}$ maps the tuple $\mathbf{x}$ to the measure of the set of $x$ that is no less than the minimum entry of $\mathbf{x}$ for each relation $\le_k$. For example, suppose $1 <_1 2 <_1 3$ and $3 <_2 2 <_2 1$ and $\mathbf{x} = (1, 3)$, then
\[ \{x : h_k\le_k x \text{ for } k = 1, 2\} = \{1, 2, 3\}. \]
Throughout this section, we are interested in the probability
\begin{equation} \label{ptc1}
\mathbb{P}^N\{\mathbf{x}\in X^N : \mathfrak{D}(\mathbf{x}) < 1 - \alpha\},
\end{equation}
where $\alpha\in (0, 1)$ and $\mathbb{P}^N$ is the completion of the (unique) product measure $\prod_{i = 1}^N \mathbb{P}$.

\begin{assumption} \label{as1}
The function $\mathfrak{D}$ is a random variable (with respect to the product space).
\end{assumption}

\subsection{The ``atomless'' case}

We first bound \eqref{ptc1} under the condition
\begin{equation} \label{ptc2}
\mathbb{P}^N\{\mathbf{x}\in X^N : x_i =_k x_{i'} \} = 0\stab \forall k\in [m], i \ne i'\in [N].
\end{equation}
The key step is to partition the event $\{1 - \mathfrak{D} > \alpha\}$. Observe that
\[ 1 - \mathfrak{D}(\mathbf{x}) = \mathbb{P}\{x\in X : x <_k h_k \text{ for some }k\in [m]\}. \]
For each $\mathbf{x}\in X^N$, let $\tilde{\mathbf{x}} = (\tilde{x}_i)_i$ be a permutation of $\mathbf{x}$ such that for each $k\in [m]$, $\tilde{x}_k$ is the minimum of $\tilde{x}_i, k\le i\le N$, under the relation $\le_k$ (i.e., $\tilde{x}_k\le_k \tilde{x}_{k'}\ \forall k'\ge k$). For example, suppose $3=_1 2 <_1 1$ and $1 <_2 2 <_2 3$ and $\mathbf{x} = (1, 2, 3)$, then the possible choices of $\tilde{\mathbf{x}}$ are $(3, 1, 2)$ and $(2, 1, 3)$. Certainly, under condition \eqref{ptc2}, there is a unique choice of the first $m$ indices of $\tilde{\mathbf{x}}$ almost surely. Now, we define $\delta_0 = 0$, and for each $k\in [m]$, define
\[ \delta_k(\mathbf{x}) = \mathbb{P}\{x\in X : x <_j \tilde{x}_j \text{ for some } j\in [k]\}. \]

\begin{assumption} \label{as2}
Each $\delta_k$ is a random variable.
\end{assumption}

Note that $\delta_k(\mathbf{x})$ is nondecreasing in $k$ for each $\mathbf{x}$. Consider the disjoint events
\[ E_k = \{\delta_{k - 1}\le \alpha, \delta_k > \alpha\}, \stab k\in [m]. \]
Under condition \eqref{ptc2}, the measure of $E_k$ does not depend on the particular choice of $\tilde{\mathbf{x}}$. We claim that $\{1 - \mathfrak{D} > \alpha\}\subseteq \cup_{k\in [m]} E_k$. Indeed, let $\mathbf{x}\in X^N$ such that $1 - \mathfrak{D}(\mathbf{x}) > \alpha$; since $h_k\le_k \tilde{x}_k$ for each $k\in [m]$, we have $\delta_m(\mathbf{x})\ge 1 - \mathfrak{D}(\mathbf{x}) > \alpha$; since $\delta_0 = 0$, there exists $k'$ such that $\delta_{k' - 1}(\mathbf{x})\le \alpha$ and $\delta_{k'}(\mathbf{x}) > \alpha$, i.e., $\mathbf{x}\in E_{k'}$. Hence,
\[ \mathbb{P}^N \{1 - \mathfrak{D} > \alpha\}\le \mathbb{P}^N(\cup_{k = 1}^m E_k) = \sum_{k = 1}^m \mathbb{P}^N(E_k). \]

It remains to bound the probability $\mathbb{P}^N(E_k)$. For each $k\in [m]$, we define
\[ S_k(x) = \mathbb{P}\{y\in X : y\ge_k x\}. \]
\begin{assumption} \label{as3}
The sets $\{x\in X : S_k(x) = \beta\}, \beta\in [0, 1],$ are measurable.
\end{assumption}
\begin{assumption} \label{as4}
If $Y\subseteq X$ is a measurable set with $\mathbb{P}(Y) > 0$, then\
\[ \sup_{x, y\in Y} \mathbb{P}\{z\in Y : x \le_k z \le_k y\} > 0. \]
\end{assumption}

Condition \eqref{ptc2} implies $\mathbb{P}\{y\in X : y =_1 x\} = 0$ for each $x\in X$; combining this fact with assumptions \ref{as3} and \ref{as4}, one can verify
\[ \mathbb{P}\{x\in X : S_1(x) = \beta\} = 0\stab \forall\beta\in [0, 1]. \]
Indeed, suppose for contradiction that $\mathbb{P}\{x\in X : S_1(x) = \beta\} > 0$; by assumption \ref{as4}, there exists $x <_1 y$ such that $S_1(x) = S_1(y) = \beta$ and $\mathbb{P}\{z\in X: x\le_1 z\le_1 y\} > 0$, but this also implies $S_1(x) = S_1(y) + \mathbb{P}\{z\in X: x\le_1 z <_1 y\} > \beta$, a contradiction.

Observe that
\[ \mathbb{P}^N(E_1) = \mathbb{P}^N\{1 - \delta_1 < 1 - \alpha\} = \left(\mathbb{P}\{x\in X: S_1(x) < 1 - \alpha\}\right)^N. \]
Let $\beta_0 = \sup \{S_1(x) : S_1(x) < 1 - \alpha, x\in X\}$, then $\beta_0\le 1 - \alpha$. If there exists $x_0$ such that $S_1(x_0) = \beta_0$, then
\[ \mathbb{P}\{x\in X: S_1(x) < 1 - \alpha\}\le \mathbb{P}\{x\in X : S_1(x) = \beta_0\} + S_1(x_0)\le 1 - \alpha; \]
otherwise, we can find a sequence $\{y_n\}$ such that $y_1 >_1 y_2 >_1 \ldots$, $S_1(y_n)\uparrow \beta_0$, and
\[ \mathbb{P}\{x\in X: S_1(x) < 1 - \alpha\} = \mathbb{P}\left(\cup_{n = 1}^\infty \{y\in X: y\ge_1 y_n\}\right) = \lim_n S_1(y_n)\le 1 - \alpha. \]
A similar argument gives $\mathbb{P}\{x\in X: S_1(x)\ge 1 - \alpha\}\le \alpha$. Thus,
\[ \mathbb{P}^N(E_1) = \left(\mathbb{P}\{x\in X: S_1(x) < 1 - \alpha\}\right)^N = (1 - \alpha)^N. \]
In fact, we can replace $\alpha$ by $\beta\in [0, 1]$ in the argument above to derive
\[ \mathbb{P}\{x\in X: S_1(x) < 1 - \beta\} = 1 - \beta\stab \forall\beta\in [0, 1]. \]

\begin{assumption} \label{as5}
The set $\{\mathbf{x}\in X^N : x_j =_j \tilde{x}_j, j\in [k]\}$ is measurable.
\end{assumption}

We next bound $\mathbb{P}^N(E_{k + 1})$ for $k\ge 1$. Consider the event
\begin{align*}
E_{k + 1}^* &= E_{k + 1}\cap \{\mathbf{x}\in X^N : x_j =_j \tilde{x}_j, j\in [k]\} \\
			&= \{\mathbf{x}\in X^N : \delta_k(\mathbf{x})\le \alpha, \delta_{k + 1}(\mathbf{x}) > \alpha, x_j =_j \tilde{x}_j, j\in [k]\}.
\end{align*}
By permutation,
\begin{equation} \label{ptc5}
\mathbb{P}^N(E_{k + 1}) = \frac{N!}{(N - k)!}\cdot \mathbb{P}^N(E_{k + 1}^*).
\end{equation}
For each $\mathbf{x}_k\in X^k$, consider the $\mathbf{x}_k$-section of $E_{k + 1}^*$:
\[ (E_{k + 1}^*)_{\mathbf{x}_k} = \{\mathbf{u}_{N - k}\in X^{N - k} : (\mathbf{x}_k, \mathbf{u}_{N - k})\in E_{k + 1}^*\}, \]
then the Tonelli's theorem implies
\begin{equation} \label{ptc4}
\mathbb{P}^N(E_{k + 1}^*) = \int \mathbb{P}^{N - k} (E_{k + 1}^*)_{\mathbf{x}_k}\ d\mathbb{P}^k(\mathbf{x}_k).
\end{equation}
If we can show that
\begin{enumerate}
\item\label{item1} the probability $\mathbb{P}^{N - k} (E_{k + 1}^*)_{\mathbf{x}_k}$ is either $0$ or $(1 - \alpha)^{N - k}$;
\item\label{item2} the set of $\mathbf{x}_k$ with $\mathbb{P}^{N - k} (E_{k + 1}^*)_{\mathbf{x}_k} = (1 - \alpha)^{N - k}$ has measure $\frac{\alpha^k}{k!}$,
\end{enumerate}
then \eqref{ptc5} and \eqref{ptc4} together yield
\[ \mathbb{P}^N(E_{k + 1}) = \binom{N}{k} \alpha^k (1 - \alpha)^{N - k}. \]

To show item \eqref{item1}, let us fix an $\mathbf{x}_k = (x_j)_j\in X^k$ such that $x_j\le_j x_{j'}$ for $j\le j'$, and
\[ \mathbb{P}\{x\in X : x <_j x_j \text{ for some } j\in [k]\} = \alpha_{\mathbf{x}_k}\le \alpha. \]
Consider
\begin{align*}
X_{\mathbf{x}_k} &= X\setminus \{x\in X : x <_j x_j \text{ for some } j\in [k]\} \\
								&= \{x\in X : x \ge_j x_j \text{ for all } j\in [k]\}
\end{align*}
and
\[ S_{\mathbf{x}_k}(x) = \mathbb{P}\{y\in X_{\mathbf{x}_k} : y\ge_{k + 1} x\}. \]
By condition \eqref{ptc2} and assumptions \ref{as3} and \ref{as4}, the set $\{x\in X_{\mathbf{x}_k} : S_{\mathbf{x}_k}(x) = \beta\}$ is measurable and has measure $0$ for each $\beta\in [0, 1 - \alpha_{\mathbf{x}_k}]$. Using an argument similar to that in the $E_1$ case, we can show that
\begin{equation} \label{ptc:lemeq}
\mathbb{P}\{x\in X_{\mathbf{x}_k}: S_{\mathbf{x}_k}(x) < 1 - \beta\} = 1 - \beta\stab \forall \beta\in [0, 1 - \alpha_{\mathbf{x}_k}].
\end{equation}
It follows that
\[ \mathbb{P}^{N - k} (E_{k + 1}^*)_{\mathbf{x}_k} = \left(\mathbb{P}\{x\in X_{\mathbf{x}_k}: S_{\mathbf{x}_k}(x) < (1 - \alpha_{\mathbf{x}_k}) - (\alpha - \alpha_{\mathbf{x}_k})\}\right)^{N - k} = (1 - \alpha)^{N - k}. \]
Note that $\mathbb{P}^{N - k} (E_{k + 1}^*)_{\mathbf{x}_k} = 0$ for other $\mathbf{x}_k$'s.

To show item \eqref{item2}, observe that
\begin{align*}
	&\mathbb{P}^k\{\mathbf{x}_k\in X^k : \mathbb{P}^{N - k} (E_{k + 1}^*)_{\mathbf{x}_k} = (1 - \alpha)^{N - k}\} \\
=\ &\mathbb{P}^k\{\mathbf{x}_k\in X^k : \mathbb{P}(X_{\mathbf{x}_k}^c)\le \alpha, x_j\le_j x_{j'} \text{ for all } j\le j'\} \\
=\ &\int_{x_1\in X : \mathbb{P}(X_{\mathbf{x}_1}^c)\le \alpha} \int_{x_2\in X_{\mathbf{x}_1} : \mathbb{P}(X_{\mathbf{x}_2}^c)\le \alpha}\ldots \int_{x_k\in X_{\mathbf{x}_{k - 1}} : \mathbb{P}(X_{\mathbf{x}_k}^c)\le \alpha} \mathbf{1}\ d\mathbb{P}(x_k)\ldots d\mathbb{P}(x_1),
\end{align*}
where the second equality follows by applying the Tonelli's theorem $k$ times. In the multiple integral, $\mathbf{x}_j\in X^j$ is the $j$-tuple with entries $x_1, \ldots, x_j$; in particular, if $j\le j'$, then $\mathbf{x}_j$ is the first $j$ entries of $\mathbf{x}_{j'}$. For each $j\in [k]$, we denote $t_j$ to be $\mathbb{P}(X_{\mathbf{x}_j}^c) - \mathbb{P}(X_{\mathbf{x}_{j - 1}}^c)$. Let us fix an appropriate $\mathbf{x}_{j - 1}$ (that appears in the multiple integral), then $\sum_{i = 1}^{j - 1} t_i\le \alpha$ is fixed. Since
\[ t_j = \mathbb{P}(X_{\mathbf{x}_j}^c\setminus X_{\mathbf{x}_{j - 1}}^c) = \mathbb{P}\{x\in X_{\mathbf{x}_{j - 1}} : x <_j x_j\} = 1 - S_{\mathbf{x}_{j - 1}}(x_j), \]
by \eqref{ptc:lemeq}, for $\beta\in [0, 1 - \sum_{i = 1}^{j - 1} t_i]$ we have
\[ \mathbb{P}\left\{x_j\in X_{\mathbf{x}_{j - 1}} : t_j\le \beta\right\} = \mathbb{P}\left\{x_j\in X_{\mathbf{x}_{j - 1}} : S_{\mathbf{x}_{j - 1}}(x_j)\ge 1 - \beta\right\} = \beta; \]
in particular, let $d_j = \alpha - \sum_{i = 1}^{j - 1} t_i$ and $n\in \mathbb{N}$, then
\[ \mathbb{P}\left\{x_j\in X_{\mathbf{x}_{j - 1}} : \frac{(a - 1)d_j}{n} < t_j\le \frac{a d_j}{n}\right\} = \frac{d_j}{n}\stab \forall a\in [n]. \]
If we calculate the integral by simple functions on the sets
\[ \left\{x_j\in X_{\mathbf{x}_{j - 1}} : \frac{(a - 1)d_j}{n} < t_j\le \frac{a d_j}{n}\right\},\stab j\in [k], n\in \mathbb{N}, a\in [n], \]
then
\begin{align*}
  &\int_{x_1\in X : \mathbb{P}(X_{\mathbf{x}_1}^c)\le \alpha} \int_{x_2\in X_{\mathbf{x}_1} : \mathbb{P}(X_{\mathbf{x}_2}^c)\le \alpha}\ldots \int_{x_k\in X_{\mathbf{x}_{k - 1}} : \mathbb{P}(X_{\mathbf{x}_k}^c)\le \alpha} \mathbf{1}\ d\mathbb{P}(x_k)\ldots d\mathbb{P}(x_1) \\
=\ &\int_{x_1\in X : t_1\le \alpha} \int_{x_2\in X_{\mathbf{x}_1} : \sum_{j = 1}^2 t_j\le \alpha}\ldots \int_0^{\alpha - \sum_{j = 1}^{k - 1} t_j} 1\ dt_k\ldots d\mathbb{P}(x_2) d\mathbb{P}(x_1) \\
\vdots\ & \\
=\ &\int_0^\alpha \int_0^{\alpha - t_1}\ldots \int_0^{\alpha - \sum_{j = 1}^{k - 1} t_j} 1\ dt_k\ldots dt_2 dt_1 \\
=\ &\frac{\alpha^k}{k!}.
\end{align*}
Thus far, we have proved the following lemma:

\begin{lemma} \label{ptclem}
Let $N\ge m$ and $\alpha\in [0, 1]$. Suppose 
\[ \mathbb{P}^N\{\mathbf{x}\in X^N : x_i =_k x_{i'} \} = 0\stab \forall k\in [m], i \ne i'\in [N], \]
and assumptions \ref{as1} to \ref{as5} are satisfied, then
\[ \mathbb{P}^N\{\mathbf{x}\in X^N : \mathfrak{D}(\mathbf{x}) < 1 - \alpha\}\le \sum_{k = 0}^{m - 1} \binom{N}{k} \alpha^k (1 - \alpha)^{N - k}. \]
\end{lemma}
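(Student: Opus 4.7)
The plan is to decompose the event $\{\mathfrak{D}(\mathbf{x}) < 1-\alpha\}$ into $m$ disjoint events $E_1, \dots, E_m$ and to show that $\mathbb{P}^N(E_k) = \binom{N}{k-1}\alpha^{k-1}(1-\alpha)^{N-k+1}$; summing over $k$ then gives the binomial tail on the right. To build the partition I would fix, for each $\mathbf{x}$, a permutation $\tilde{\mathbf{x}}$ such that $\tilde{x}_k$ is the $\le_k$-minimum of $\{\tilde{x}_i : i \ge k\}$ (by \eqref{ptc2} the first $m$ entries are a.s.\ unique). Setting $\delta_0 = 0$ and $\delta_k(\mathbf{x}) = \mathbb{P}\{x : x <_j \tilde{x}_j \text{ for some } j \le k\}$, the sequence $(\delta_k)$ is nondecreasing and dominates $1 - \mathfrak{D}$, so $E_k := \{\delta_{k-1} \le \alpha < \delta_k\}$ covers $\{1 - \mathfrak{D} > \alpha\}$.

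For $k = 1$, the event is essentially $\{\min_i S_1(x_i) \ge 1 - \alpha\}$ with $S_1(x) = \mathbb{P}\{y : y \ge_1 x\}$. Using assumptions \ref{as3}, \ref{as4} together with the no-ties condition \eqref{ptc2}, I would rule out positive-mass level sets of $S_1$ (any such level set would contain a strictly $\le_1$-ordered pair by \ref{as4}, contradicting constancy of $S_1$), and conclude $\mathbb{P}\{S_1(x) < 1-\beta\} = 1-\beta$ for every $\beta \in [0,1]$, which gives $\mathbb{P}^N(E_1) = (1-\alpha)^N$. For $k \ge 2$, I would exploit the symmetry of $\mathbb{P}^N$ to write $\mathbb{P}^N(E_k) = \frac{N!}{(N-k+1)!}\mathbb{P}^N(E_k^*)$, where $E_k^* \subseteq E_k$ additionally requires $x_j =_j \tilde{x}_j$ for $j < k$, and then apply Tonelli to split $\mathbb{P}^N(E_k^*)$ into an integral over the first $k-1$ coordinates of a section measure over the remaining $N-k+1$ coordinates.

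The main obstacle is showing that the outer factor has measure $\alpha^{k-1}/(k-1)!$. To handle it I would introduce the incremental gaps $t_j = \mathbb{P}(X_{\mathbf{x}_j}^c) - \mathbb{P}(X_{\mathbf{x}_{j-1}}^c) = 1 - S_{\mathbf{x}_{j-1}}(x_j)$ on the restricted spaces $X_{\mathbf{x}_j} := \{x : x \ge_i x_i \text{ for all } i \le j\}$. A conditional no-atoms identity, obtained by rerunning the $E_1$ argument inside $X_{\mathbf{x}_{j-1}}$ with the order $\le_j$, yields $\mathbb{P}\{x_j \in X_{\mathbf{x}_{j-1}} : t_j \le \beta\} = \beta$ for $\beta \in [0, 1 - \sum_{i<j} t_i]$, which functions as a change of variables that makes each $t_j$ conditionally uniform on $[0, \alpha - \sum_{i<j} t_i]$. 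The resulting iterated integral becomes the volume $\int_0^\alpha \int_0^{\alpha - t_1} \cdots \int_0^{\alpha - \sum_{j<k-1} t_j} 1 \, dt_{k-1} \cdots dt_1 = \alpha^{k-1}/(k-1)!$ of the appropriate simplex. The inner section integral is $(1-\alpha)^{N-k+1}$ by the same restricted no-atoms argument with $\beta = 1 - \alpha$. Multiplying the three factors yields $\mathbb{P}^N(E_k) = \binom{N}{k-1}\alpha^{k-1}(1-\alpha)^{N-k+1}$, and summing over $k \in [m]$ finishes the proof.
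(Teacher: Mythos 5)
Your proposal follows essentially the same route as the paper's proof: the same partition into events $E_k=\{\delta_{k-1}\le\alpha<\delta_k\}$ built from the permutation $\tilde{\mathbf{x}}$, the same no-atoms argument via $S_1$ and assumptions \ref{as3}--\ref{as4} for the base case, and the same permutation-plus-Tonelli factorization with the incremental gaps $t_j$ reducing the outer integral to the simplex volume $\alpha^{k-1}/(k-1)!$. The argument is correct (up to an index shift of one relative to the paper's labeling of $E_{k+1}$), so no further comparison is needed.
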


\begin{remark}
{\rm
Assumption \ref{as4} is the only assumption that is not related to measurability. Without this assumption, it may happen that
\[ \mathbb{P}\{y\in X : y <_R x\} = 1 \]
for every $x\in X$. The following construction is based on the example given in \cite{R19}, and it relies on the axiom of choice: by the well ordering principle, there exists the set of countable ordinals $\Omega$ that is well ordered by $\le_w$ \cite{F99} (note that $\Omega$ itself is uncountable); consider the probability space $(\Omega, \mathcal{M}, \mu)$, where
\[ \mathcal{M} = \{U\subseteq \Omega : U \text{ countable or } U^c \text{ countable}\}, \]
and $\mu(U) = 0$ if $U$ is countable and $\mu(U) = 1$ if $U^c$ is countable. Let $\le_R$ be the converse of $\le_w$ (i.e., $x\le_R y\iff y\le_w x$); it can be verified that $\mathcal{M}$ contains sets of the form $\{y =_R x\}$ and $\{y\le_R x\}$ for each $x\in \Omega$. Since the set
\[ \{y\in \Omega : y\ge_R x\} = \{y\in \Omega : y\le_w x\} \]
is countable for every $x\in \Omega$, we have
\[ \mu\{y\in \Omega: y <_R x\} = 1 - \mu\{y\in \Omega: y\ge_R x\} = 1. \]
}
\end{remark}

\subsection{The general case}
We deal with the general case by constructing a new probability space that satisfies condition \eqref{ptc2}. Let $([0, 1], \mathcal{L}, \lambda)$ be the probability space such that $\mathcal{L}$ is the Lebesgue $\sigma$-algebra on $[0, 1]$, and $\lambda$ is the Lebesgue measure. Consider the product space $(\mathfrak{X}, \mathcal{A}\otimes\mathcal{L}, \mathbb{P}_\lambda)$, where $\mathfrak{X} = X\times [0, 1]$, and $\mathbb{P}_\lambda = \mathbb{P}\times \lambda$ is the product measure. For each $k\in [m]$, we define a binary relation $\le_k$ on $\mathfrak{X}$: we say $(x, s)\le_k (y, t)$ if either $x <_k y$, or $x =_k y$ and $s\le t$; note that $\mathfrak{X}$ is a chain (possibly without anti-symmetry) with respect to the partially ordered set $(\mathfrak{X}, \le_k)$. It can be verified that the $\sigma$-algebra $\mathcal{A}\otimes \mathcal{L}$ contains sets of the form $\{(y, t) : (y, t) =_k (x, s)\}$ and $\{(y, t) : (y, t)\le_k (x, s)\}$ for each $(x, s)\in \mathfrak{X}$ and $k\in [m]$.

We denote $\mathbb{P}_\lambda^N$ to be the completion of the product measure $\prod_{i = 1}^N \mathbb{P}_\lambda$. Observe that
\[ \{(\mathbf{x}, \mathbf{s})\in \mathfrak{X}^N : (x_i, s_i) =_k (x_{i'}, s_{i'})\}\subseteq \{(\mathbf{x}, \mathbf{s})\in \mathfrak{X}^N : s_i = s_{i'}\}; \]
since the latter is a $\mathbb{P}_\lambda^N$-null set when $i\ne i'$, we have
\[ \mathbb{P}_\lambda^N\{(\mathbf{x}, \mathbf{s})\in \mathfrak{X}^N : (x_i, s_i) =_k (x_{i'}, s_{i'})\} = 0\stab \forall k\in [m], i\ne i'\in [N]. \]
We define the function $\mathfrak{D}_\lambda$ on $\mathfrak{X}^N$ similarly, i.e., let $(\mathbf{x}, \mathbf{s})\in \mathfrak{X}^N$ and denote $h_k(\mathbf{x}, \mathbf{s})$ to be the minimum of $(x_i, s_i)'s$ under the relation $\le_k$, then
\[ \mathfrak{D}_\lambda(\mathbf{x}, \mathbf{s}) = \mathbb{P}_\lambda\{(x, s)\in \mathfrak{X} : h_k\le_k (x, s) \text{ for all } k\in [m]\}. \]
If $(\mathfrak{X}, \mathcal{A}\otimes\mathcal{L}, \mathbb{P}_\lambda)$ satisfies assumptions \ref{as2} to \ref{as5}, then by the proof of Lemma \ref{ptclem},
\[ \mathbb{P}_\lambda^N(\cup_{k = 1}^m E_k) = \sum_{k = 1}^m \mathbb{P}_\lambda^N(E_k) = \sum_{k = 0}^{m - 1} \binom{N}{k} \alpha^k (1 - \alpha)^{N - k}, \]
where $E_k$'s are defined with respect to $\mathfrak{X}^N$. Since
\[ \mathfrak{D}_\lambda(\mathbf{x}, \mathbf{s})\le \mathfrak{D}_\lambda(\mathbf{x}, \mathbf{0}) = \mathfrak{D}(\mathbf{x}), \]
we have
\[ (\{1 - \mathfrak{D} > \alpha\}\times [0, 1]^N)\subseteq \{1 - \mathfrak{D}_\lambda > \alpha\}\subseteq \cup_{k = 1}^m E_k. \]
Therefore,
\[ \mathbb{P}^N\{\mathfrak{D} < 1 - \alpha\}\le \mathbb{P}_\lambda^N(\cup_{k = 1}^m E_k) = \sum_{k = 0}^{m - 1} \binom{N}{k} \alpha^k (1 - \alpha)^{N - k}. \]

\begin{theorem} \label{ptcthm}
Let $N\ge m$ and $\alpha\in [0, 1]$. Suppose $\mathfrak{D}$ satisfies assumption \ref{as1}, and $(\mathfrak{X}, \mathcal{A}\otimes\mathcal{L}, \mathbb{P}_\lambda)$ satisfies assumptions \ref{as2} to \ref{as5}, then
\begin{equation} \label{ptc:prob}
\mathbb{P}^N\{\mathbf{x}\in X^N : \mathfrak{D}(\mathbf{x}) < 1 - \alpha\} \le \sum_{k = 0}^{m - 1} \binom{N}{k} \alpha^k (1 - \alpha)^{N - k}.
\end{equation}
\end{theorem}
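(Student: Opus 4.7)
The plan is to reduce to the ``atomless'' situation already handled by Lemma \ref{ptclem}. The construction is laid out in the discussion preceding the statement: enlarge the sample space by a continuous tiebreaker coordinate so that the no-ties hypothesis of Lemma \ref{ptclem} becomes automatic, apply that lemma on the enlarged space, and then push the bound back to $X^N$.

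First I would work on the augmented space $(\mathfrak{X}, \mathcal{A}\otimes \mathcal{L}, \mathbb{P}_\lambda)$ with $\mathfrak{X}=X\times [0,1]$ and the lexicographic relations $(x,s)\le_k (y,t)$ iff either $x<_k y$, or $x=_k y$ and $s\le t$. Any equality $(x_i,s_i)=_k (x_{i'},s_{i'})$ forces $s_i=s_{i'}$, and the hyperplane $\{s_i=s_{i'}\}\subseteq [0,1]^N$ has Lebesgue measure zero; by Fubini this propagates to $\mathbb{P}_\lambda^N\{(x_i,s_i)=_k (x_{i'},s_{i'})\}=0$ for every $k$ and every $i\ne i'$. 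Since $\mathfrak{X}$ remains a chain under each $\le_k$ and $\mathcal{A}\otimes \mathcal{L}$ still contains the basic events $\{(y,t)=_k (x,s)\}$ and $\{(y,t)\le_k (x,s)\}$, Lemma \ref{ptclem} applies on $\mathfrak{X}$ under assumptions \ref{as2}--\ref{as5} and yields
\[
\mathbb{P}_\lambda^N\{(\mathbf{x},\mathbf{s})\in \mathfrak{X}^N : \mathfrak{D}_\lambda(\mathbf{x},\mathbf{s}) < 1-\alpha\} \le \sum_{k=0}^{m-1}\binom{N}{k}\alpha^k(1-\alpha)^{N-k}.
\]

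Next I would transfer the estimate back to $X^N$ via the domination $\mathfrak{D}_\lambda(\mathbf{x},\mathbf{s})\le \mathfrak{D}_\lambda(\mathbf{x},\mathbf{0})=\mathfrak{D}(\mathbf{x})$. The inequality holds because replacing all tiebreakers by $0$ can only make each lexicographic minimum $h_k(\mathbf{x},\mathbf{s})$ smaller under $\le_k$, thereby enlarging the event $\{(x,s):h_k\le_k (x,s)\ \forall k\}$; and with every $s_i=0$ this event coincides with $\{x:h_k(\mathbf{x})\le_k x\ \forall k\}\times [0,1]$, whose $\mathbb{P}_\lambda$-measure equals $\mathfrak{D}(\mathbf{x})$. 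The domination supplies the inclusion $\{\mathfrak{D}<1-\alpha\}\times [0,1]^N\subseteq \{\mathfrak{D}_\lambda<1-\alpha\}$, and since the $\mathbb{P}_\lambda^N$-measure of the left-hand product equals $\mathbb{P}^N\{\mathfrak{D}<1-\alpha\}$, taking $\mathbb{P}_\lambda^N$ of both sides recovers \eqref{ptc:prob}.

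The main obstacle is entirely contained in Lemma \ref{ptclem}; once the continuous tiebreaker has reduced us to the atomless case, the bulk of the combinatorial work (partitioning $\{1-\mathfrak{D}_\lambda>\alpha\}$ by the index at which the cumulative deficit $\delta_k$ first exceeds $\alpha$, and computing each piece via iterated Tonelli together with the fact that the survival functions become uniformly distributed in the absence of ties) lives there. The residual work in proving Theorem \ref{ptcthm} itself is only to verify that the product construction is well-posed, namely that each $\le_k$ is a chain relation on $\mathfrak{X}$ and that the tiebreaker does not destroy the measurability hypotheses; both verifications are routine consequences of the product structure of $\mathcal{A}\otimes\mathcal{L}$ and of the way the assumptions of the theorem are already phrased with respect to $\mathbb{P}_\lambda$.
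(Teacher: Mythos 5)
Your proposal follows the paper's proof essentially verbatim: the same product space $\mathfrak{X}=X\times[0,1]$ with lexicographic tiebreaking relations, the same observation that an equality $(x_i,s_i)=_k(x_{i'},s_{i'})$ forces the Lebesgue-null coincidence $s_i=s_{i'}$, and the same domination $\mathfrak{D}_\lambda(\mathbf{x},\mathbf{s})\le\mathfrak{D}_\lambda(\mathbf{x},\mathbf{0})=\mathfrak{D}(\mathbf{x})$ to pull the bound back to $X^N$. The only (minor) difference is that you invoke the \emph{statement} of Lemma \ref{ptclem} on $\mathfrak{X}$, which would formally also require assumption \ref{as1} for $\mathfrak{D}_\lambda$; the paper instead uses only the \emph{proof} of that lemma, bounding the measurable cover $\cup_{k=1}^m E_k$ by the binomial tail and then sandwiching the event $\{\mathfrak{D}<1-\alpha\}\times[0,1]^N$ (measurable by assumption \ref{as1} on the original space) inside it, which is precisely why the theorem's hypotheses ask only for assumptions \ref{as2}--\ref{as5} on the augmented space.
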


\begin{example}
{\rm
One natural example of chains on probability space is the one where the orders are preserved by the inverse of random variables. Suppose there are $m$ random variables $\tau_k, k\in [m]$, on the probability space $(X, \mathcal{A}, \mathbb{P})$, such that for each $k\in [m]$,
\[ \tau_k(x)\le \tau_k(y) \implies x\le_k y, \stab x, y\in X; \]
certainly, it does not preclude the case where $\tau_k(x) < \tau_k(y)$ but $x =_k y$. For each $x\in X$, the equivalence class $\ip{x}_k = \{y\in X : y =_k x\}$ is the preimage of some intervals (open, closed, or half-open) under $\tau_k$; indeed, let $y =_k x$ and $\tau_k(y)\le \tau_k(x)$, then $z =_k x$ if $\tau_k(z)\in [\tau_k(y), \tau_k(x)]$, thus $\ip{x}_k$ is the preimage of $\text{conv }\tau_k(\ip{x}_k)$, i.e., the convex hull of $\tau_k(\ip{x}_k)$; this also implies there are only countably many $\ip{x}_k$ such that $\tau_k(\ip{x}_k)$ is not a singleton. Similarly, the set $\{y\in X : y \le_k x\}$ is the preimage of some half-infinite intervals under $\tau_k$. For $s\in \mathbb{R}$, we denote $\ip{s}_k$ to be $\{x\in X : s\in \text{conv }\tau_k(\ip{x}_k)\}$; when $\ip{s}_k$ is non-empty, it coincides with some $\ip{x}_k$. We first show that assumption \ref{as1} is satisfied, i.e., $\mathfrak{D}$ is a random variable.

Assumption \ref{as1}: We need to show $\{\mathfrak{D}\le \alpha\}$ is measurable for every $\alpha\in [0, 1)$. For each $k\in [m]$ and $p\in \mathbb{Q}$, we define
\[ B_{k, p} = \begin{cases} \{y\in X : y \ge_k x\} & \text{if } p\in \text{conv }\tau_k(\ip{x}_k) \\ \tau_k^{-1}(p, \infty) & \text{if no such } x \text{ exists} \end{cases}, \]
and consider the collection of sets
\begin{align*}
\mathfrak{B}_k &= \{B_{k, p} : p\in \mathbb{Q}\}\cup \{\{y\in X : y\ge_k x\} : \mathbb{P}(\ip{x}_k) > 0, x\in X\} \\
		&\tab \cup \{\{y\in X : y >_k x\} : \mathbb{P}(\ip{x}_k) > 0, x\in X\};
\end{align*}
each $\mathfrak{B}_k$ is countable since there can be at most countably many $\ip{x}_k$ such that $\mathbb{P}(\ip{x}_k) > 0$. Let
\[ \mathfrak{E}_n = \bigcup_{B_k\in \mathfrak{B}_k : \mathbb{P}(\cap_{k = 1}^m B_k)\le \alpha + \frac{1}{n}} \left(\cap_{k = 1}^m B_k\right)^N. \]
We claim that $\{\mathfrak{D}\le \alpha\} = \cap_{n = 1}^\infty \mathfrak{E}_n$. To see $\{\mathfrak{D}\le \alpha\}\supseteq \cap_{n = 1}^\infty \mathfrak{E}_n$, let $\mathbf{x}\in \mathfrak{E}_n$, then
\[ \mathfrak{D}(\mathbf{x}) = \mathbb{P}(\cap_{k = 1}^m \{x\in X : h_k\le_k x\})\le \mathbb{P}(\cap_{k = 1}^m B_k)\le \alpha + \frac{1}{n}, \]
thus $\mathfrak{D}(\mathbf{x})\le \alpha$ for each $\mathbf{x}\in \cap_{n = 1}^\infty \mathfrak{E}_n$. On the other hand, let $\mathbf{x}\in X^N, \mathfrak{D}(\mathbf{x})\le \alpha$; for each $k\in [m]$, if $\mathbb{P}(\ip{h_k}_k) > 0$ or there exists rational $p_k\in \text{conv } \tau_k(\ip{h_k}_k)$, then we set $B_k = \{y\in X : y\ge_k h_k\}$; otherwise, we can find a rational sequence $p_i\uparrow \tau_k(h_k)$, and $\mathbb{P}(\tau_k^{-1}[p_i, \tau_k(h_k)])\downarrow \mathbb{P}(\tau_k^{-1}\{\tau_k(h_k)\}) = 0$, hence there exists rational $p_0 < \tau_k(h_k)$ such that $\mathbb{P}(\tau_k^{-1}[p_0, \tau_k(h_k)])\le \frac{1}{mn}$; if there exists $y_k\in X$ such that $\mathbb{P}(\ip{y_k}_k) > 0$ and $p_0\in \text{conv }\tau_k(\ip{y_k}_k)$, then we set $B_k = \{y\in X : y >_k y_k\}$, else we set $B_k = B_{k, p_0}$; either way, we have $\mathbf{x}\in (\cap_{k = 1}^m B_k)^N$, and
\begin{align*}
\mathfrak{D}(\mathbf{x}) &= \mathbb{P}(\cap_{k = 1}^m \{y\in X : y\ge_k h_k\}) \\
		&= \mathbb{P}(\cap_{k = 1}^m B_k\setminus (B_k\setminus \{y\in X : y\ge_k h_k\})) \\
		&\ge \mathbb{P}(\cap_{k = 1}^m B_k) - \sum_{k = 1}^m \frac{1}{mn}.
\end{align*}
Hence, $\mathbf{x}\in \mathfrak{E}_n$ for each $n\in \mathbb{N}$. Since each $\mathfrak{E}_n$ is measurable, $\{\mathfrak{D}\le \alpha\}$ is measurable as desired.

Now, consider the product space $(\mathfrak{X}, \mathcal{A}\otimes \mathcal{L}, \mathbb{P}_\lambda)$. Let $\le_\ell$ be the lexicographical order on $\mathbb{R}\times [0, 1]$; note that $(\tau_k(x), s)\le_\ell (\tau_k(y), t)$ does {\it not} imply $(x, s)\le_k (y, t)$ in general, though it is still valid that
\[ (\tau_k(x), s)\le_\ell (\tau_k(y), t)\implies x\le_k y, \stab (x, s), (y, t)\in \mathfrak{X}. \]
We show below that assumptions \ref{as2} to \ref{as5} are satisfied automatically.

Assumption \ref{as3}: Suppose the set $\{(x, s)\in \mathfrak{X} : S_k(x, s) = \beta\}$ is non-empty, let
\[ (v_1, v_2) = \inf_{S_k(x, s) = \beta} (\tau_k(x), s)\tab (w_1, w_2) = \sup_{S_k(x, s) = \beta} (\tau_k(x), s); \]
here, the sup and the inf are taken with respect to the lexcographical order. In the case where the sup and the inf are attained,
\begin{align*}
\{(x, s) : S_k(x, s) = \beta\} &= (\tau_k^{-1}[v_1, w_1]\times [0, 1]) \setminus (\left(\ip{v_1}_k\times [0, v_2)\right) \cup \left(\ip{w_1}_k\times (w_2, 1]\right)),
\end{align*}
thus the set is measurable. The other cases are similar.

Assumption \ref{as4}: Suppose $Y\subseteq \mathfrak{X}$ is a measurable set with $\mathbb{P}_\lambda(Y) > 0$. Consider
\[ \hat{Y}_k = \{(\tau_k(x), s) : (x, s)\in Y\}\subseteq \mathbb{R}\times [0, 1]; \]
then there exist two sequences $\hat{\mathbf{v}}^1\ge_\ell \hat{\mathbf{v}}^2\ge_\ell\ldots$ and $\hat{\mathbf{w}}^1\le_\ell \hat{\mathbf{w}}^2\le_\ell\ldots$ in $\hat{Y}_k$, such that
\[ \hat{Y}_k = \cup_{n = 1}^\infty \{\hat{\mathbf{y}}\in \hat{Y}_k : \hat{\mathbf{v}}^n\le_\ell \hat{\mathbf{y}}\le_\ell \hat{\mathbf{w}}^n\}. \]
For each $n$, we pick an $\mathbf{v}^n\in Y$ such that $\mathbf{v}^n\le_k \mathbf{v}^{n - 1}$, $v^n_1 =_k \tau_k^{-1}(\hat{v}^n_1)$, and
\[ v^n_2\le \inf_{(x, s)\in Y : x =_k v^n_1} s + \frac{1}{n}; \]
similarly, we pick an $\mathbf{w}^n\in Y$ such that $\mathbf{w}^n\ge_k \mathbf{w}^{n - 1}$, $w^n_1 =_k \tau_k^{-1}(\hat{w}^n_1)$, and
\[ w^n_2\ge \sup_{(x, s)\in Y : x =_k w^n_1} s - \frac{1}{n}; \]
then $\mathbf{v}^1\ge_k \mathbf{v}^2\ge_k\ldots$ and $\mathbf{w}^1\le_k \mathbf{w}^2\le_k\ldots$, and
\[ Y_o = Y\setminus \left(\cup_{n = 1}^\infty \{(x, s)\in Y : \mathbf{v}^n\le_k (x, s) \le_k \mathbf{w}^n\}\right) \]
is a $\mathbb{P}_\lambda$-null set; indeed, let $(x, s)\in Y$, then $\hat{\mathbf{v}}^{n_0}\le_\ell (\tau_k(x), s)\le_\ell \hat{\mathbf{w}}^{n_0}$ for some $n_0\in \mathbb{N}$, which implies $v^n_1\le_k x\le_k w^n_1$ for any $n\ge n_0$; note that $(x, s)$ can only be in $Y_o$ when either $x =_k w_1^n$ for all $n\ge n_0$ and $w^n_2\uparrow s$, or $x =_k v_1^n$ for all $n\ge n_0$ and $v^n_2\downarrow s$, thus $Y_o\subseteq X\times \{s_1, s_2\}$ for some $s_1, s_2\in [0, 1]$, which has measure $0$. Therefore, by continuity from below,
\[ \sup_{\mathbf{v}, \mathbf{w}\in Y} \mathbb{P}_\lambda\{(x, s)\in Y : \mathbf{v}\le_k (x, s) \le_k \mathbf{w}\} = \mathbb{P}_\lambda(Y) > 0. \]

Assumption \ref{as5}: Since
\[ \mathfrak{N}_k = \{(\mathbf{x}, \mathbf{s})\in \mathfrak{X}^N : (x_i, s_i) =_j (x_{i'}, s_{i'}) \text{ for some } j\in [k], i\ne i'\in [N]\} \]
is a $\mathbb{P}_\lambda^N$-null set, we only need to show that
\[ \mathfrak{S}_k = \{(\mathbf{x}, \mathbf{s})\in \mathfrak{X}^N : (x_j, s_j) <_j (x_{j'}, s_{j'}) \text{ for all } j < j', j\in [k], j'\in [m]\} \]
is measurable. It suffices to show $\{(\mathbf{x}, \mathbf{s})\in \mathfrak{X}^N : (x_j, s_j) <_j (x_{j'}, s_{j'})\}$ is measurable for each $j < j', j\in [k], j'\in [m]$. By definition, $(x_j, s_j) <_j (x_{j'}, s_{j'})$ if and only if either $x_j <_j x_{j'}$, or $x_j =_j x_{j'}$ and $s_j < s_{j'}$. Since $\{x_j <_j x_{j'}\}$ is the permutation of the set
\[ \cup_{q\in \mathbb{Q}} \left(\tau_j^{-1}(-\infty, q]\times \tau_j^{-1}(q, \infty)\setminus \ip{q}_j\times \ip{q}_j\right)\times X^{N - 2} \times [0, 1]^N, \]
$\{x_j <_j x_{j'}\}$ is measurable. Also, $\{x_j =_j x_{j'}\} = (\{x_j <_j x_{j'}\}\cup \{x_j >_j x_{j'}\})^c$ is measurable, hence
\[ \{(x_j, s_j) <_j (x_{j'}, s_{j'})\} = \{x_j <_j x_{j'}\}\cup (\{x_j =_j x_{j'}\}\cap \{s_j < s_{j'}\}) \]
is measurable.

Assumption \ref{as2}: We need to show that $\{\delta_k\ge \alpha\}$ is measurable for every $\alpha\in (0, 1]$ and $k\in [m]$. If we can show that $\{\delta_k\ge \alpha\}\cap \mathfrak{S}_k$ is measurable, then $\{\delta_k\ge \alpha\}$ is measurable by permutation. For each $j\in [k]$ and $p, q\in \mathbb{Q}$, we define
\[ B_{j, p, q} = \begin{cases} \{(y, t)\in \mathfrak{X} : (y, t)\ge_j (x, q)\} & \text{if } p\in \text{conv}\ \tau_j(\ip{x}_j) \\ \tau_j^{-1}(p, \infty)\times [0, 1] & \text{if no such } x \text{ exists} \end{cases},\]
and consider the collection of sets
\[ \mathfrak{B}_j = \{B_{j, p, q} : p, q\in \mathbb{Q}\}\cup \{\{(y, t)\in \mathfrak{X} : (y, t)\ge_j (x, q)\} : \mathbb{P}(\ip{x}_j) > 0, x\in X, q\in \mathbb{Q}\}; \]
each $\mathfrak{B}_j$ is countable since there can be at most countably many $\ip{x}_j$ such that $\mathbb{P}(\ip{x}_j) > 0$. Let
\[ \mathfrak{A}_n = \left[\bigcup_{B_j\in \mathfrak{B}_j : \mathbb{P}_\lambda(\cup_{j = 1}^k B_j^c)\ge \alpha - \tfrac{1}{n}} \left(\prod_{j = 1}^k B_j\right) \times \mathfrak{X}^{N - k}\right]\cap \mathfrak{S}_k. \]
We claim that $\{\delta_k\ge \alpha\}\cap \mathfrak{S}_k = \cap_{n = 1}^\infty \mathfrak{A}_n$. To see that $\{\delta_k\ge \alpha\}\cap \mathfrak{S}_k \supseteq \cap_{n = 1}^\infty \mathfrak{A}_n$, let $(\mathbf{x}, \mathbf{s})\in \mathfrak{A}_n$, since $(\mathbf{x}, \mathbf{s})\in \mathfrak{S}_k$, we have
\[ \delta_k(\mathbf{x}, \mathbf{s}) = \mathbb{P}_\lambda(\cup_{j = 1}^k \{(y, t)\in \mathfrak{X} : (y, t) <_j (x_j, s_j)\})\ge \mathbb{P}_\lambda(\cup_{j = 1}^k B_j^c)\ge \alpha - \tfrac{1}{n}; \]
thus $\delta_k(\mathbf{x}, \mathbf{s})\ge \alpha$ for each $(\mathbf{x}, \mathbf{s})\in \cap_{n = 1}^\infty \mathfrak{A}_n$. On the other hand, let $(\mathbf{x}, \mathbf{s})\in \mathfrak{S}_k$ such that $\delta_k(\mathbf{x}, \mathbf{s})\ge \alpha$; for each $j\in [k]$, if $\mathbb{P}(\ip{x_j}_j) > 0$ or there exists rational $p_j\in \text{conv }\tau_j(\ip{x_j}_j)$, then we pick $q_j\in \mathbb{Q}, s_j\ge q_j\ge s_j - \frac{1}{kn}$, and set $B_j = \{(y, t)\ge_j (x_j, q_j)\}$; otherwise, we can find a rational sequence $p_i\uparrow \tau_j(x_j)$, such that $\mathbb{P}(\tau_j^{-1}[p_i, \tau_j(x_j)]) \downarrow \mathbb{P}(\tau_j^{-1}\{\tau_j(x_j)\}) = 0$, hence there exists rational $p_0 < \tau_j(x_j)$ such that $\mathbb{P}(\tau_j^{-1}[p_0, \tau_j(x_j)])\le \tfrac{1}{kn}$, and we set $B_j = B_{j, p_0, 1}$; either way, we have $(\mathbf{x}, \mathbf{s})\in \big(\prod_{j = 1}^k B_j\big)\times \mathfrak{X}^{N - k}$, and
\begin{align*}
\delta_k(\mathbf{x}, \mathbf{s}) &= \mathbb{P}_\lambda(\cup_{j = 1}^k \{(y, t)\in \mathfrak{X} : (y, t) <_j (x_j, s_j)\}) \\
		&= \mathbb{P}_\lambda(\cup_{j = 1}^k B_j^c\cup (B_j\setminus \{(y, t) : (y, t)\ge_j (x_j, s_j)\})) \\
		&\le \mathbb{P}_\lambda(\cup_{j = 1}^k B_j^c) + \sum_{j = 1}^k \frac{1}{kn}.
\end{align*}
Hence, $(\mathbf{x}, \mathbf{s})\in \mathfrak{A}_n$ for each $n\in \mathbb{N}$. Now, since each $\mathfrak{A}_n$ is measurable, $\{\delta_k\ge \alpha\}\cap \mathfrak{S}_k$ is measurable as desired.
}
\end{example}

\begin{corollary} \label{ptccor1}
Let $N\ge m$ and $\alpha\in [0, 1]$. Suppose there exist random variables $\tau_k, k\in [m]$, such that
\[ \tau_k(x)\le \tau_k(y)\implies x \le_k y,\stab x, y\in X, \]
then
\[ \mathbb{P}^N\{\mathbf{x}\in X^N : \mathfrak{D}(\mathbf{x}) < 1 - \alpha\} \le \sum_{k = 0}^{m - 1} \binom{N}{k} \alpha^k (1 - \alpha)^{N - k}. \]
\end{corollary}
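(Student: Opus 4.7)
The plan is to reduce the corollary to Theorem \ref{ptcthm} by verifying its hypotheses, all of which have essentially been checked in the Example immediately preceding the corollary. In other words, the hypothesis of the corollary is precisely the setup of that Example, so the proof is little more than a bookkeeping argument that collects the measurability verifications done there and then invokes Theorem \ref{ptcthm}.

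In more detail, I would first observe that since each $\tau_k$ is a random variable on $(X, \mathcal{A}, \mathbb{P})$ with $\tau_k(x) \le \tau_k(y) \Rightarrow x \le_k y$, for every $x \in X$ and $k \in [m]$ the equivalence class $\ip{x}_k = \{y : y =_k x\}$ equals $\tau_k^{-1}(\mathrm{conv}\,\tau_k(\ip{x}_k))$ and the set $\{y : y \le_k x\}$ is the preimage of a half-infinite interval under $\tau_k$. In particular both sets lie in $\mathcal{A}$, so the standing measurability assumption on the $\sigma$-algebra in Section \ref{sec:ptc} holds. Moreover, because each $\tau_k$ takes only countably many values with positive $\mathbb{P}$-mass on their fibers, at most countably many equivalence classes $\ip{x}_k$ have positive measure.

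Next I would appeal directly to the verifications carried out in the Example just above the corollary. That example shows, under exactly our hypothesis, that (i) $\mathfrak{D}$ is a random variable on $(X^N, \mathcal{A}^N, \mathbb{P}^N)$, so Assumption \ref{as1} holds; and (ii) on the enlarged space $(\mathfrak{X}, \mathcal{A}\otimes \mathcal{L}, \mathbb{P}_\lambda)$ with the perturbed order $\le_k$, Assumptions \ref{as3}, \ref{as4}, \ref{as5}, and \ref{as2} all hold automatically. The verifications there use only the existence of the order-preserving random variables $\tau_k$ and the structure of $\mathcal{A}\otimes\mathcal{L}$, so no additional input is required.

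With all the hypotheses of Theorem \ref{ptcthm} in hand, the conclusion
\[ \mathbb{P}^N\{\mathbf{x}\in X^N : \mathfrak{D}(\mathbf{x}) < 1 - \alpha\} \le \sum_{k = 0}^{m - 1} \binom{N}{k} \alpha^k (1 - \alpha)^{N - k} \]
follows immediately. The main (and really only) obstacle is that the corollary is stated without repeating the verifications of Assumptions \ref{as1}--\ref{as5}, so one must be careful to cite the Example at the correct level of generality and check that the argument there does not implicitly use anti-symmetry or any further regularity on $\tau_k$ beyond being a measurable function into $\mathbb{R}$. Once that check is made, invoking Theorem \ref{ptcthm} closes the proof.
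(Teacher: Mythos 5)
Your proposal is correct and matches the paper's approach exactly: the paper gives no separate proof of Corollary \ref{ptccor1}, since the Example immediately preceding it carries out precisely the verifications of Assumption \ref{as1} on $(X,\mathcal{A},\mathbb{P})$ and Assumptions \ref{as2}--\ref{as5} on $(\mathfrak{X}, \mathcal{A}\otimes\mathcal{L}, \mathbb{P}_\lambda)$ under the hypothesis that order-preserving random variables $\tau_k$ exist, after which Theorem \ref{ptcthm} yields the bound.
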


We next give examples where the bound \eqref{ptc:prob} is tight.
\begin{example}
{\rm
Consider the probability space $([0, 1], \mathcal{L}, \lambda)$. Let $m\in \mathbb{N}$, we define $m$ binary relations $\le_k$ on $[0, 1]$: for each $k\in [m]$, we say $s <_k t$ if
\begin{itemize}
\item $s < t$ and $s, t\in \left[\tfrac{k - 1}{m}, \tfrac{k}{m}\right)$;
\item $s < t$ and $s, t\in [0, 1]\setminus \left[\tfrac{k - 1}{m}, \tfrac{k}{m}\right)$;
\item $s\in \left[\tfrac{k - 1}{m}, \tfrac{k}{m}\right)$ and $t\in [0, 1]\setminus \left[\tfrac{k - 1}{m}, \tfrac{k}{m}\right)$;
\end{itemize}
in other words, $\le_k$ is derived from the standard ordering $\le$ by making the interval $\left[\tfrac{k - 1}{m}, \tfrac{k}{m}\right)$ smaller. Let $N\ge m$ and $\alpha\in [0, \frac{1}{m}]$, we claim that
\[ \mathbb{\lambda}^N\{\mathbf{x}\in [0, 1]^N : \mathfrak{D}(\mathbf{x}) < 1 - \alpha\} = \sum_{k = 0}^{m - 1} \binom{N}{k} \alpha^k (1 - \alpha)^{N - k}. \]
The idea is similar to the proof of Lemma \ref{ptclem}. We define $\Delta_0 = 0$, and for each $k\in [m]$, define
\[ \Delta_k(\mathbf{x}) = \lambda\{x\in [0, 1] : x <_j h_j(\mathbf{x}) \text{ for some } j\in [k]\}, \]
where $h_j(\mathbf{x})$ is the minimum of $x_i$'s under the relation $\le_j$. Consider the disjoint events
\[ E_k = \{\Delta_{k - 1}\le \alpha, \Delta_k > \alpha\},\stab k\in [m], \]
then $\{\mathfrak{D} < 1 - \alpha\} = \cup_{k = 1}^m E_k$. For almost every $\mathbf{x}$ such that $\Delta_{k - 1}(\mathbf{x})\le \alpha\le \tfrac{1}{m}$, we have $h_j(\mathbf{x})\in \left[\tfrac{j - 1}{m}, \tfrac{j}{m}\right)$ for all $j\in [k - 1]$. Hence, $E_k$ has the same measure as the set
\begin{equation} \label{ptctight}
\left\{\mathbf{x}\in [0, 1]^N :
\begin{array}{l} h_j\in \left[\tfrac{j - 1}{m}, \tfrac{j}{m}\right)\ \forall j\in [k - 1], \sum_{j = 1}^{k - 1} \left(h_j - \tfrac{j - 1}{m}\right)\le \alpha, \\
h_k\not\in \left[\tfrac{k - 1}{m}, \tfrac{k - 1}{m} + \alpha - \sum_{j = 1}^{k - 1} \left(h_j - \tfrac{j - 1}{m}\right)\right)\end{array}\right\};
\end{equation}
the measure of \eqref{ptctight} is $\binom{N}{k - 1} \alpha^{k - 1}(1 - \alpha)^{N - k + 1}$, which can be calculated by considering the permutation $x_j = h_j, j\in [k - 1]$, and conditioning on the values of $x_j$.
}
\end{example}

So far, we have assumed $N\ge m$. As illustrated by the following example, there may be no formula that bounds the probability \eqref{ptc:prob} uniformly when $N < m$. Let $N = 1$ and $m = 2$, and consider again $([0, 1], \mathcal{L}, \lambda)$; the relation $\le_1$ is $\le$, and $\le_2$ is the converse of $\le$ (i.e., $s\le_2 t \iff t \le s$). For any $\alpha\in [0, 1)$, we have
\[ \mathbb{\lambda}\{\mathbf{x}\in X : \mathfrak{D}(\mathbf{x}) < 1 - \alpha\} = 1. \]

\subsection{Applications} \label{sec:ptc3}

A collection of sets $\left\{U^\omega\right\}_{\omega\in I}$ is a chain if for any $\omega_1, \omega_2\in I$, either $U^{\omega_1}\subseteq U^{\omega_2}$ or $U^{\omega_1}\supseteq U^{\omega_2}$. Let $\xi$ be a random vector supported on the set $\Xi$. Suppose we are given $m$ chains $\big\{U_k^\xi\big\}_{\xi\in \Xi}, k\in [m]$, then each chain $\big\{U_k^\xi\big\}_{\xi\in \Xi}$ induces a binary relation $\le_k$ on $\Xi$, i.e.,
\[ \omega_1\le_k \omega_2 \iff U_k^{\omega_1}\subseteq U_k^{\omega_2}\stab \omega_1, \omega_2\in \Xi. \]
Note that $\Xi$ is a chain with respect to the partially ordered set $(\Xi, \le_k)$ for each $k\in [m]$. Let $\xi^{[N]} = (\xi^1, \ldots, \xi^N)$ be the i.i.d. (random) sample of $\xi$ of size $N$, then
\[ \mathfrak{D}(\xi^{[N]}) = \mathbb{P}\left\{\xi\in \Xi : \cap_{i = 1}^N U_k^{\xi^i}\subseteq U_k^\xi \text{ for all } k\in [m]\right\}. \]
\begin{corollary} \label{ptccor2}
Under the assumptions of Theorem \ref{ptcthm},
\[ \mathbb{P}^N\left\{\mathfrak{D}(\xi^{[N]}) < 1 - \alpha\right\} \le \sum_{k = 0}^{m - 1} \binom{N}{k} \alpha^k (1 - \alpha)^{N - k}. \]
\end{corollary}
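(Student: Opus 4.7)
The plan is to verify that Corollary \ref{ptccor2} is a direct translation of Theorem \ref{ptcthm} once we identify the appropriate binary relations on $\Xi$ and match the definitions of $\mathfrak{D}$.

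First I would formalize the correspondence between chains of sets and partial orders. For each $k\in[m]$, the relation $\omega_1\le_k \omega_2 \iff U_k^{\omega_1}\subseteq U_k^{\omega_2}$ is transitive and reflexive because set inclusion is; it need not be antisymmetric, but as the excerpt explains, in such cases we simply pass to equivalence classes via $=_k$. The defining property of a chain of sets says that for any $\omega_1,\omega_2\in \Xi$ we have $U_k^{\omega_1}\subseteq U_k^{\omega_2}$ or $U_k^{\omega_2}\subseteq U_k^{\omega_1}$, which is exactly the statement that $(\Xi,\le_k)$ is totally chained in the sense of the appendix. Hence $(\Xi,\mathcal{F},\mathbb{P})$ together with the relations $\{\le_k\}_{k\in[m]}$ fits the framework in which Theorem \ref{ptcthm} was proved.

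Next I would match the functions $\mathfrak{D}$. Given a sample $\xi^{[N]}=(\xi^1,\ldots,\xi^N)$, the finite subchain $\{\xi^1,\ldots,\xi^N\}$ of $(\Xi,\le_k)$ has a minimum element $h_k(\xi^{[N]})$, namely the index $i$ for which $U_k^{\xi^i}$ is contained in each $U_k^{\xi^j}$. By totality of inclusion along the chain,
\[
\bigcap_{i=1}^N U_k^{\xi^i} \;=\; U_k^{h_k(\xi^{[N]})}.
\]
Therefore, for any $\xi\in\Xi$,
\[
h_k(\xi^{[N]})\le_k \xi \;\iff\; U_k^{h_k(\xi^{[N]})}\subseteq U_k^\xi \;\iff\; \bigcap_{i=1}^N U_k^{\xi^i}\subseteq U_k^\xi.
\]
Quantifying over all $k\in[m]$ shows that the event $\{h_k\le_k \xi \ \forall k\}$ from the appendix coincides with the event $\{\cap_{i=1}^N U_k^{\xi^i}\subseteq U_k^\xi \ \forall k\}$ appearing in Corollary \ref{ptccor2}. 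Consequently the two definitions of $\mathfrak{D}(\xi^{[N]})$ agree pointwise.

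With this identification in place, the conclusion follows immediately from Theorem \ref{ptcthm}: the hypotheses of that theorem are assumed in the corollary, so
\[
\mathbb{P}^N\{\mathfrak{D}(\xi^{[N]})<1-\alpha\}\;\le\;\sum_{k=0}^{m-1}\binom{N}{k}\alpha^k(1-\alpha)^{N-k}.
\]
There is no real obstacle here; the only point that deserves a careful sentence is the observation that the chain property guarantees the existence of a minimum in any finite subchain and hence the intersection collapses to a single $U_k^{h_k}$. Everything else is mechanical bookkeeping, and the measurability hypotheses of Theorem \ref{ptcthm} are imported wholesale via the phrase ``under the assumptions of Theorem \ref{ptcthm}'' in the statement.
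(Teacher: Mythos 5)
Your proposal is correct and follows essentially the same route as the paper: the paper's own justification consists precisely of inducing the relation $\omega_1\le_k\omega_2 \iff U_k^{\omega_1}\subseteq U_k^{\omega_2}$, observing that $\Xi$ is a chain under each $\le_k$ and that $\mathfrak{D}(\xi^{[N]})$ as defined via set inclusions coincides with the abstract $\mathfrak{D}$ of the appendix, and then invoking Theorem \ref{ptcthm}. Your additional remarks (finite chains have a minimum, the intersection collapses to $U_k^{h_k}$) just make explicit what the paper leaves implicit.
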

Oftentimes, $\big\{U_k^\xi\big\}$ are sublevel sets, i.e., $U_k^\xi = c_k^{-1}(-\infty, \ell_k(\xi)]$ for some function $c_k$ and random variable $\ell_k$ supported on $\Xi$. In this case,
\[ \ell_k(\omega_1)\le \ell_k(\omega_2)\implies U_k^{\omega_1}\subseteq U_k^{\omega_2}\implies \omega_1\le_k \omega_2, \stab \omega_1, \omega_2\in \Xi. \]
We can also define the relation
\[ \omega_1\le_k \omega_2\iff \ell_k(\omega_1)\le \ell_k(\omega_2), \stab \omega_1, \omega_2\in \Xi, \]
and
\[ \mathfrak{D}_r(\xi^{[N]}) = \mathbb{P}\left\{\xi\in \Xi : \min_{i\in [N]} \ell_k(\xi^i)\le \ell_k(\xi) \text{ for all } k\in [m]\right\}. \]
Note that
\[ \min_{i\in [N]} \ell_k(\xi^i)\le \ell_k(\xi) \implies \cap_{i = 1}^N U_k^{\xi^i}\subseteq U_k^\xi, \]
hence, $\mathfrak{D}(\xi^{[N]})\ge \mathfrak{D}_r(\xi^{[N]})$. Since the orders are preserved by the inverse of the random variables $\ell_k$, we can apply Corollary \ref{ptccor1} instead of Theorem \ref{ptcthm}.
\begin{corollary} \label{ptccor3}
If $U_k^\xi$'s are sublevel sets, then
\[ \mathbb{P}^N\left\{\mathfrak{D}(\xi^{[N]}) < 1 - \alpha\right\}\le \mathbb{P}^N\left\{\mathfrak{D}_r(\xi^{[N]}) < 1 - \alpha\right\} \le \sum_{k = 0}^{m - 1} \binom{N}{k} \alpha^k (1 - \alpha)^{N - k}. \]
\end{corollary}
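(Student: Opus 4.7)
The plan is to prove the two inequalities in succession, in both cases reducing to tools already developed in the appendix.

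For the first inequality, I would simply invoke the pointwise bound $\mathfrak{D}(\xi^{[N]}) \ge \mathfrak{D}_r(\xi^{[N]})$ that was already recorded in the paragraph preceding the corollary: the implication
\[ \min_{i \in [N]} \ell_k(\xi^i) \le \ell_k(\xi) \implies \cap_{i=1}^N U_k^{\xi^i} \subseteq U_k^\xi \]
shows that, for each fixed sample $\xi^{[N]}$, the $\xi$-event defining $\mathfrak{D}_r$ sits inside the one defining $\mathfrak{D}$. Hence $\mathfrak{D} \ge \mathfrak{D}_r$ pointwise on $\Xi^N$, and consequently $\{\mathfrak{D} < 1-\alpha\} \subseteq \{\mathfrak{D}_r < 1-\alpha\}$, which is the first inequality after taking $\mathbb{P}^N$.

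For the second inequality, I would apply Corollary \ref{ptccor1} directly to $(\Xi, \mathcal{F}, \mathbb{P})$ equipped with the $m$ relations
\[ \omega_1 \le_k \omega_2 \iff \ell_k(\omega_1) \le \ell_k(\omega_2), \qquad k \in [m]. \]
Each $\le_k$ is a total preorder (anti-symmetry not being required by the appendix), and by construction the random variable $\tau_k := \ell_k$ trivially satisfies the hypothesis $\tau_k(\omega_1) \le \tau_k(\omega_2) \implies \omega_1 \le_k \omega_2$ of Corollary \ref{ptccor1} (with equivalence rather than mere implication). Under $\le_k$, the minimum $h_k$ of $\xi^{[N]}$ is any $\xi^i$ realizing $\min_{i \in [N]} \ell_k(\xi^i)$, so the condition $h_k \le_k \xi$ is exactly $\min_{i \in [N]} \ell_k(\xi^i) \le \ell_k(\xi)$. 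Therefore the $\mathfrak{D}$-function of the appendix, computed with respect to these particular orders, coincides with $\mathfrak{D}_r$, and Corollary \ref{ptccor1} delivers the bound $\sum_{k=0}^{m-1} \binom{N}{k}\alpha^k(1-\alpha)^{N-k}$ for $\mathbb{P}^N\{\mathfrak{D}_r(\xi^{[N]}) < 1-\alpha\}$.

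There is no real obstacle: the heavy lifting has already been done in the proofs of Theorem \ref{ptcthm} and Corollary \ref{ptccor1}. The only thing to recognize is that choosing $\tau_k := \ell_k$ places one in the ideal setting where the order and the order-preserving random variable agree, after which the measurability Assumptions \ref{as1}--\ref{as5} are handled wholesale by the example following Theorem \ref{ptcthm}, and the cleaner surrogate $\mathfrak{D}_r$ inherits the exponential tail estimate essentially for free.
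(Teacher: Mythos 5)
Your proposal is correct and follows the paper's own argument exactly: the first inequality comes from the pointwise bound $\mathfrak{D}(\xi^{[N]})\ge \mathfrak{D}_r(\xi^{[N]})$ established in the preceding paragraph, and the second comes from applying Corollary \ref{ptccor1} with $\tau_k:=\ell_k$ and the relation $\omega_1\le_k\omega_2\iff \ell_k(\omega_1)\le\ell_k(\omega_2)$, under which the appendix's $\mathfrak{D}$ coincides with $\mathfrak{D}_r$. Nothing is missing.
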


\section*{Acknowledgments}
The author is indebted to Alexander Shapiro and the anonymous referees for constructive comments which helped to improve the presentation of the manuscript.

\nocite{*}
\bibliographystyle{amsplain}
\bibliography{references}

\end{document}